\documentclass{amsart}
\usepackage{graphicx}

\newtheorem{theorem}{Theorem}
\newtheorem{lemma}[theorem]{Lemma}
\newtheorem{prop}[theorem]{Proposition}

\newtheorem{ques}[theorem]{Question}

\newcommand{\bibtitle}[1]{\emph{#1}}
\newcommand{\dfn}[1]{\textbf{#1}}

\newcommand{\ty}{\nabla\mathrm{Y}}
\newcommand{\yt}{\mathrm{Y}\nabla}
\newcommand{\Arf}{\mathrm{Arf}}

\newcommand{\badY}{\overline{\mathrm{Y}}}

\title{Many, many more intrinsically knotted graphs}
\author{Noam Goldberg}
\address{Department of Mathematics,
Occidental College,
Los Angeles, CA 90041}
\author{Thomas W.\ Mattman}
\address{Department of Mathematics and Statistics,
California State University, Chico,
Chico, CA 95929-0525}
\author{Ramin Naimi}
\address{Department of Mathematics,
Occidental College,
Los Angeles, CA 90041}

\thanks{The research was supported in part by the Undergraduate Research Center at Occidental College
and by DMS-0905300 at Occidental College}

\subjclass[2000]{Primary 05C10, Secondary 57M15, 57M25}

\keywords{spatial graphs, intrinsically knotted, triangle-Y move}
\date \today

\begin{document}

\begin{abstract}    

We list more than 200 new examples of minor minimal intrinsically knotted graphs and 
describe many more that are intrinsically knotted and likely minor minimal.

\end{abstract}

\maketitle


\section*{Introduction}

In the early 1980s Conway and Gordon~\cite{cg}
showed that every embedding of $K_7$, the complete graph on seven vertices,
in $S^3$ contains a nontrivial knot.
A graph with this property is said to be \dfn{intrinsically knotted} (IK).
The question ``Which graphs are IK?"
has remained open for the past 30 years.

A graph $H$ is a \dfn{minor} of another graph $G$
if $H$ can be obtained from a subgraph of $G$
by contracting zero or more edges.
A graph $G$ with a given property
is said to be \dfn{minor minimal} with respect to that property
if no proper minor of $G$ has the property.
It is easy to show that a graph is IK iff
it contains a minor that is minor minimal intrinsically knotted (MMIK).
Robertson and Seymour's Graph Minor Theorem~\cite{rs}
says that in any infinite set of graphs,
at least one is a minor of another.
It follows that for any property whatsoever,
there are only finitely many graphs that are minor minimal with respect to that property.
In particular, there are only finitely many MMIK graphs.
Furthermore, deciding whether one graph is a minor of another can be done algorithmically.
Hence, if we knew the finite set of all MMIK graphs,
we would be able to decide whether or not any given graph is IK.
However, obtaining this finite set,
or even putting an upper bound on its size,
has turned out to be very difficult.
In contrast, Robertson, Seymour, and Thomas \cite{rst} settled the corresponding question for \dfn{intrinsically linked} (IL) graphs
--- i.e., graphs for which every embedding in $S^3$ contains a nontrivial link --- in 1995:
there are exactly seven MMIL graphs;
they are obtained from $K_6$ by
$\ty$ and $\yt$ moves (we will define these shortly).

Prior to this work 41 MMIK graphs were known.
We have found 222 new MMIK graphs,
as well as many more IK graphs that are likely minor minimal.
In this paper we describe these 222 graphs.
For 101 of them we give a ``traditional'' proof that they are IK.
To prove that the remainder are also IK,
we rely on the computer program of \cite{mn}.
The program proves that a graph is IK
by showing every embedding of the graph
contains a $D_4$ minor with opposite cycles linked;
this is explained in greater detail in Section~\ref{sectionG28}.
We also prove that all 222 graphs are minor minimal.

First, some more
definitions and terminology.
A \dfn{spatial} graph is a graph embedded in $S^3$.
A spatial graph is said to be \dfn{knotted} (resp., \dfn{linked})
if it contains a nontrivial knot (resp., nontrivial link).
An abstract graph $G$ is \dfn{$n$-apex}
if one can remove $n$ vertices from $G$ to obtain a planar graph.
For an edge $e$ of $G$, $G - e$ denotes 
the graph obtained by removing $e$ from $G$ and $G/e$ 
the graph obtained by contracting $e$.

A \dfn{$\ty$ move} on an abstract graph
consists of removing the edges of a triangle (i.e., 3-cycle) $abc$ in the graph,
then adding a new vertex $v$ and connecting it
to each of the vertices $a$, $b$, and $c$,
as shown in Figure~\ref{triangleYmoveFigure}.
The reverse of this operation is called a
\dfn{$\yt$ move}.
Note that in a $\yt$ move,
the vertex $v$ cannot have degree greater than three.
(There is various terminology for this in the literature: 
$\nabla$ = triangle = Delta = $\Delta$; 
$\mathrm{Y}$ = wye = star;
move = exchange = transformation.)

\begin{figure}[ht]

 \centering
 \includegraphics[width=80mm]{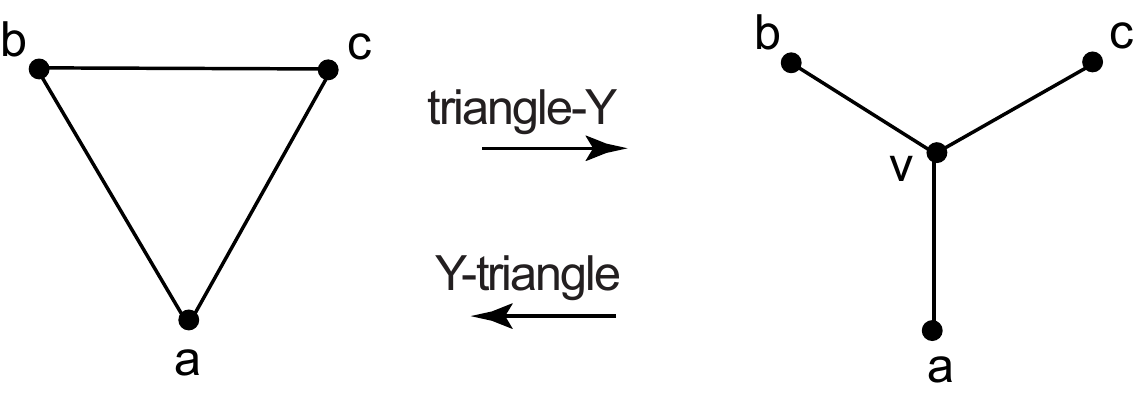}
 \caption{$\ty$ and $\yt$ moves.
 \label{triangleYmoveFigure}
 }
\end{figure}

If a graph $G'$ is obtained from a graph $G$
by exactly one $\ty$ move,
we say $G'$ is a \dfn{child} of $G$,
and $G$ is a \dfn{parent} of $G'$.
A graph that has no degree three vertices
can have no parents and we call such a graph
\dfn{parentless}; a triangle--free graph
has no children and is \dfn{childless}.
If $G'$ is obtained from $G$
by one or more $\ty$ moves,
we say $G'$ is a \dfn{descendant} of $G$,
and $G$ is an \dfn{ancestor} of $G'$.
If $G'$ is obtained from $G$
by \emph{zero} or more operations,
each of which is a $\ty$ or $\yt$ move,
we say $G$ and $G'$ are \dfn{cousins} of each other
(thus, being cousins is an equivalence relation).
The set of all cousins of $G$ is called the \dfn{$G$ family}.

Sachs \cite{sa} observed that every child of an IL graph is IL.
Essentially the same argument shows that every child of an IK graph is IK.
As a corollary of \cite{rst}, we also know that every parent of an IL graph is IL.
In contrast, it is shown in \cite{FN} that
a parent of an IK graph need not be IK.
In this paper we also use the following lemma:

\begin{lemma}
\label{lemYTMMIK}
\cite{bdlst, OT}
If an IK graph $G$ has a MMIK child,
then $G$ is MMIK.
\end{lemma}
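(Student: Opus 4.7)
The plan is to show that every proper minor $H$ of $G$ is not IK, which combined with the hypothesis that $G$ is IK will give MMIK. Since $H$ is a proper minor, $H$ is a minor of $G-e$ or $G/e$ for some edge $e$, and because the property ``not IK'' is closed under taking minors, it suffices to show that $G-e$ and $G/e$ are not IK for every edge $e$ of $G$. I split into two cases depending on whether $e$ belongs to the triangle $abc$ on which the $\ty$ move producing $G'$ was performed.

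If $e$ is a triangle edge, say $e=ab$, I claim that $G-e$ is itself a minor of $G'$: contracting $vc$ in $G'$ identifies $v$ with $c$, converts the edges $va$ and $vb$ into edges $ca$ and $cb$, and thereby recovers every edge of $G$ except $ab$. Symmetrically, $G-bc$ and $G-ca$ arise from $G'$ by contracting $va$ and $vb$, respectively. For contractions, $G/ab$ appears as a minor of $G'$ by contracting both $va$ and $vb$ (merging $v,a,b$ into a single vertex whose neighborhood matches that of the contracted vertex in $G/ab$), and similarly for $G/bc$ and $G/ca$. Each of these is a proper minor of $G'$ since $|V(G')|=|V(G)|+1$, so the MMIK hypothesis on $G'$ forces them to be non-IK.

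If $e$ is not a triangle edge, then the triangle $abc$ survives in both $G-e$ and $G/e$, so the $\ty$ move on that triangle may be applied to each, yielding $G'-e$ and $G'/e$, respectively. These are proper minors of $G'$ (because $e\in E(G')$ is either deleted or contracted), so MMIK again makes them non-IK. Since a child of an IK graph is IK (Sachs' argument, adapted to knotting as noted in the paper), the parent of a non-IK graph must itself be non-IK; hence $G-e$ and $G/e$ are not IK in this case either.

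The main obstacle is the bookkeeping in the triangle-edge case: one must verify carefully that contracting the chosen $v$-edges in $G'$ produces precisely $G-e$ or $G/e$, tracking which triangle edges are restored via the rerouted $\ty$ edges and which remain absent. Once this structural observation is in place, the MMIK hypothesis on $G'$, combined with minor-closedness of ``not IK'' and the parent--child principle for the non-triangle case, delivers the conclusion.
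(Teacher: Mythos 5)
The paper does not prove this lemma; it is quoted from \cite{bdlst, OT} and used as a black box, so there is no in-paper argument to compare against. Your proof is correct and is essentially the standard argument from those references: reduce minimality to the edge minors $G-e$ and $G/e$, realize each such minor (or a child of it) as a proper minor of the MMIK child $G'$, and invoke minor-closedness of ``not IK'' together with the fact that a child of an IK graph is IK. The triangle-edge case is handled exactly right: $G-ab = G'/vc$ and $G/ab = G'/va/vb$ are proper minors of $G'$. One small imprecision in the non-triangle case: if $e=ad$ with $d\notin\{b,c\}$ but $d$ adjacent in $G$ to $b$ (or $c$), then performing the $\ty$ move on the surviving triangle of $G/e$ does \emph{not} yield $G'/e$ exactly --- the move deletes the edge from the merged vertex to $b$, while $G'/e$ retains it via $db$. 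The child $(G/e)'$ is then only a subgraph of $G'/e$; but since a subgraph of a proper minor of $G'$ is still a proper minor of $G'$, your conclusion that $(G/e)'$ is not IK, and hence that its parent $G/e$ is not IK, goes through unchanged. You should also note (as the paper does in its explicit minimality arguments) that the reduction to edge minors presumes $G$ has no isolated vertices; this is automatic here, since an isolated vertex of $G$ would persist in $G'$ and contradict $G'$ being MMIK.
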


In addition, we make frequent use of a lemma that is a consequence
of the observation (due, independently, to
\cite{BBFFHL} and \cite{OT}) that the join, $H * K_2$, of $H$ and $K_2$
is IK if and only if $H$ is nonplanar.

\begin{lemma} \label{lem2ap}
\cite{BBFFHL, OT}
If $G$ is $2$--apex, then $G$ is not IK.
\end{lemma}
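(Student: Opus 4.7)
The plan is to deduce the lemma directly from the cited join observation by sandwiching $G$ between its planar deletion $H$ and the join $H * K_2$.

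First I would unpack the hypothesis: since $G$ is 2-apex, there exist vertices $a, b \in V(G)$ such that $H := G - \{a,b\}$ is planar. The key structural observation is then that $G$ is a subgraph of $H * K_2$, where the $K_2$ is taken on the vertex set $\{a,b\}$. Indeed, $H * K_2$ has vertex set $V(H) \cup \{a,b\}$, contains all edges of $H$, the edge $ab$, and every possible edge from $a$ or $b$ to a vertex of $H$; since $V(G) = V(H) \cup \{a,b\}$ and the edges of $G$ incident to $\{a,b\}$ form a subset of these, the inclusion $G \subseteq H * K_2$ is immediate.

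Next I would invoke the observation of \cite{BBFFHL, OT}: because $H$ is planar, $H * K_2$ is \emph{not} IK. Finally, I would use the standard fact that the property of being not IK is inherited by subgraphs --- a knotless embedding of $H * K_2$ restricts to a knotless embedding of $G$ --- to conclude that $G$ is not IK.

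There is no real obstacle here; the only thing to double-check is that when $G$ already contains some edges between $\{a,b\}$ and $H$ (or the edge $ab$ itself), adjoining the missing ones yields exactly $H * K_2$ and not a multigraph, so the subgraph inclusion holds on the nose. Given the cited join characterization, this is the entire argument.
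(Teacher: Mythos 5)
Your argument is correct and is precisely the deduction the paper intends: the lemma is stated without proof as a consequence of the cited observation that $H * K_2$ is IK iff $H$ is nonplanar, and your sandwiching of $G$ inside $H * K_2$ (with $H = G - \{a,b\}$ planar) together with heredity of non-IKness under subgraphs is exactly that deduction spelled out.
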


\begin{table}[ht]
\begin{center}
\begin{tabular}{|l|c|c|c|c|} \hline
Family & Graphs & IK  & \multicolumn{2}{c|}{MMIK Graphs} \\
& (Total) & Graphs & Known & New \\ \hline \hline
$K_7$ & 20 & 14 & 14 & 0 \\ \hline
$K_{3,3,1,1}$ & 58 & 58 & 26 & 32 \\ \hline
$E_9+e$ & 110 & 110 & 0 & 33 \\ \hline
$G_{9,28}$ & 1609 & 1609 & 0 & 156 \\ \hline
$G_{14,25}$ & $>$ 600,000 & unknown & 0 & 1 \\ \hline
\end{tabular}
\caption{Families of the 222 new MMIK graphs.} \label{tblresults}
\end{center}
\end{table}

The graphs we study here fall into several families.
Below we give a quick overview of our results, which are summarized 
in Table~\ref{tblresults}; 
details are provided in the following sections, 
with one section devoted to each family.
Some of these families contain a large number of graphs;
we used a computer program to construct these families.
The $K_7$ family consists of 20 graphs,
14 of which were previously known to be MMIK.
We show the remaining 6 are not IK
(this was also shown, independently, in \cite{hnty}).
The $K_{3,3,1,1}$ family consists of 58 graphs,
26 of which were previously known to be MMIK.
We show the remaining 32 are also MMIK.
The $E_9 + e$ family consists of 110 graphs.
We show that all are IK
and exactly 33 of them are MMIK.
The $G_{9,28}$ family consists of 1609 graphs.
We show they are all IK
and at least 156 of them are MMIK.
For 101 of these 156 graphs, we prove the
graph is MMIK without making use of the computer program of \cite{mn}.
Sampling results obtained by computer suggest that
well over half of the graphs in this family are MMIK.
The $G_{14,25}$ family consists of over 600,000 graphs;
we don't know the exact number.
We only show that $G_{14,25}$ itself is MMIK.

Note that in each family all graphs have the same number of edges since
$\ty$ and $\yt$ moves do not change the number of edges in a graph.
However,  if two edges of a $\mathrm{Y}$ are part of a triangle,
then a $\yt$ move on that $\mathrm{Y}$ results in double edges (i.e., two edges with the same endpoints);
in this case we say that the initial graph has a $\badY$.
It turns out that there is no graph with a $\badY$ in
the families of each of the graphs
$K_7$, $K_{3,3,1,1}$,
$E_9+e$, $G_{9,28}$, and $G_{13,30}$. (This last graph is described below.)
The $G_{14,25}$ family, however, does contain graphs with a $\badY$.
Whenever our computer program that generates these families
encounters a $\badY$,
it does not perform a $\yt$ move on that $\badY$,
since the resulting graph, after deleting one of its double edges,
would have fewer edges than the initial graph.
(We prefer to consider graphs with different number of edges to be in distinct families.)

Note that a graph obtained by performing a $\yt$ move on a $\badY$
followed by deleting one of the resulting double edges
can also be obtained by
just contracting one of the edges in the $\badY$.
So it might be interesting to perform such $\yt$ moves
and study the resulting graphs;
they might lead to new MMIK graphs:
\begin{ques}
Find an example of a MMIK graph that results from contracting an
edge of a $\badY$ in the family of some other MMIK graph.
\end{ques}
In particular, this would be a way to move from the family of one MMIK 
graph to that of another. We will not pursue this further here as our examples
of a  $\badY$ are in the $G_{14,25}$ family, which is already huge
even without considering additional graphs constructed in this way.

Although verifying that a given graph is MMIK can be laborious,
using our computer program to generate new candidates for MMIK graphs
turned out to be relatively quick.
Considering the ease with which we found families of new MMIK graphs,
we expect there are many more such families.
Since we know of no upper bound on the number of MMIK graphs,
it seems that until there is more progress in the theory,
it may not be worthwhile to continue the search for more MMIK graphs.

Instead, we propose a couple of questions regarding the
size of a MMIK family. The $G_{14,25}$ example shows that these families can become quite large.
However, the question of the smallest family remains open. In particular, we can ask:

\begin{ques}
Is there a MMIK graph that is its own family?
\end{ques}

Such a graph would be both childless and parentless. One way to approach this might be to
investigate the following.

\begin{ques}
Given an arbitrary graph, is there an efficient way of finding,
or at least estimating, how many cousins it has?
\end{ques}

For example, the MMIK graph described by Foisy in \cite{f2},
which has 13 vertices and 30 edges ($G_{13,30}$),
has more edges than any of the graphs mentioned above.
So we were surprised to learn that
its family consists of only seven graphs,
making it the smallest family known to us 
that contains a MMIK graph.

Finally, we remark that our study includes a description of
four new MMIK graphs on nine vertices:
$E_9+e$, $G_{9,28}$, and Cousins 12 and 41 of the $K_{3,3,1,1}$ family.
A computer search~\cite{M} suggests that these, along with the known
(i.e., as in \cite{KS}) MMIK graphs in the
$K_7$ and $K_{3,3,1,1}$  families, form a complete list of MMIK graphs on nine or fewer vertices. 
In particular, we expect that the families described in this paper 
include all MMIK graphs with at most nine vertices.

\section{The $K_7$ Family}

\begin{figure}[ht]
\begin{center}
\includegraphics[scale=0.7]{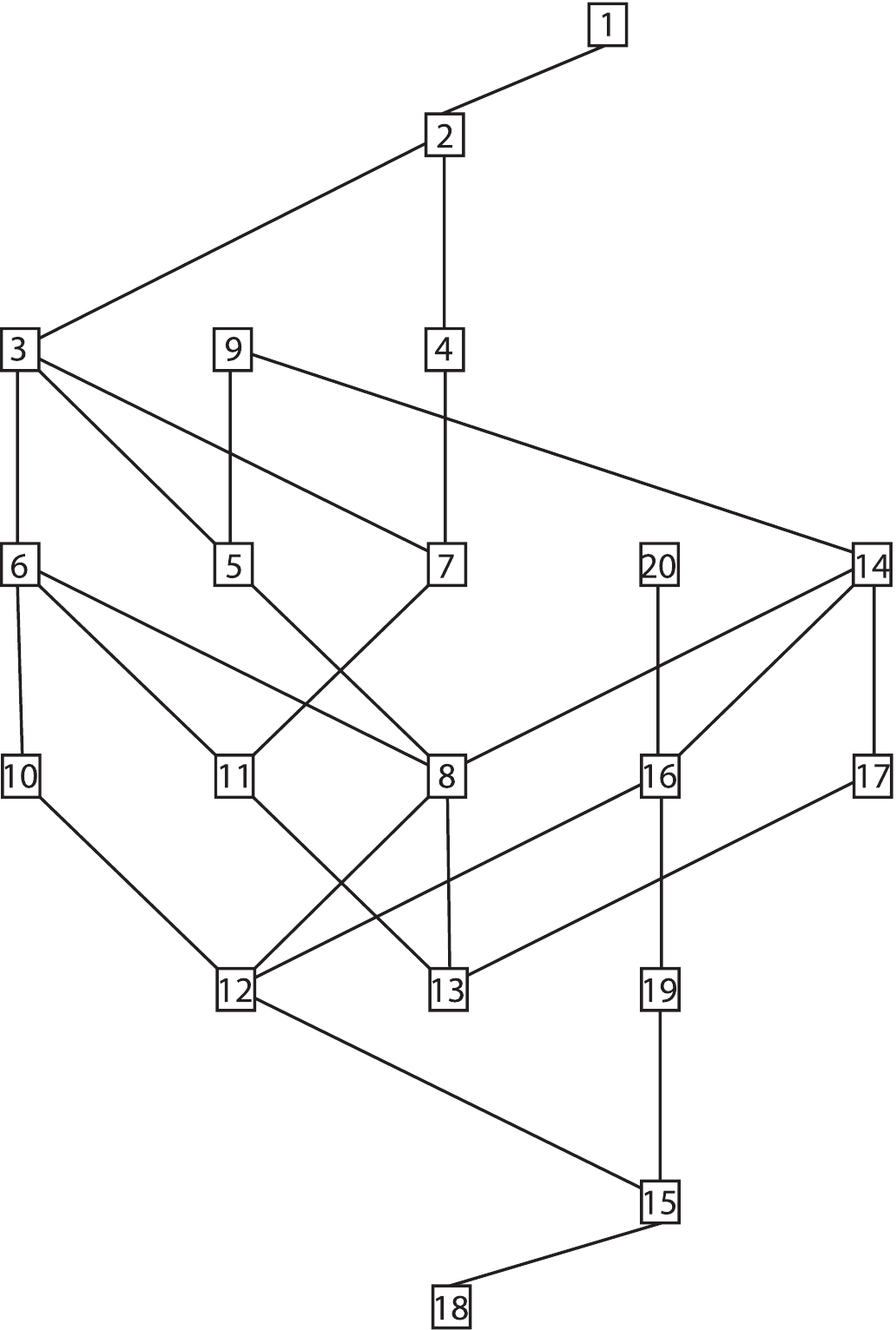}
\caption{The $K_7$ family.}\label{figK7F}
\end{center}
\end{figure}

\begin{figure}[ht]
\begin{center}
\includegraphics[scale=0.35]{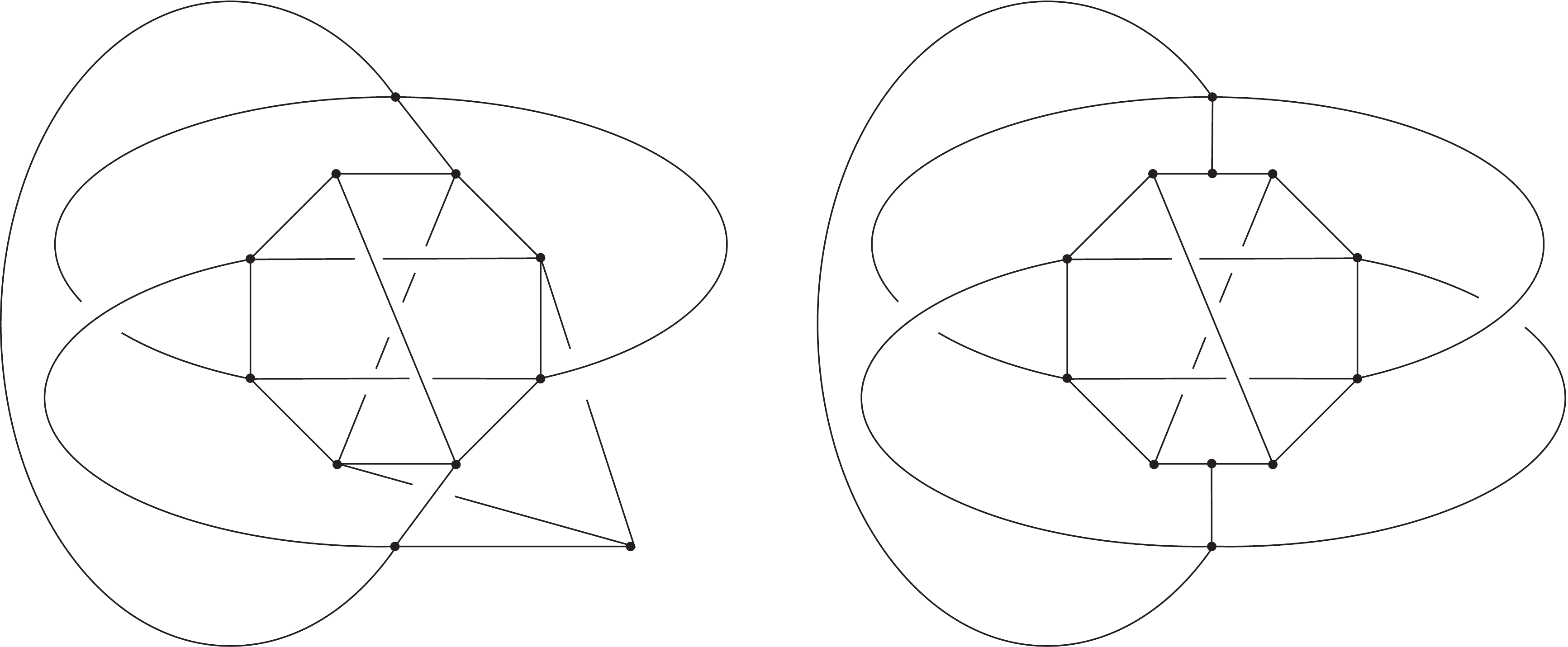}
\caption{Unknotted embeddings of Cousin 17 (left) and Cousin 19 (right) of $K_7$.}
\label{figC179}
\end{center}
\end{figure}

Figure~\ref{figK7F} shows the family of 20 graphs derived from $K_7$ by
$\yt$ and $\ty$ moves. 
An edge list for each of these 20 graphs can be found in the Appendix~\cite{GMN}.
Graphs at the same horizontal level have the same number of vertices, beginning with $K_7$ (Cousin~1) at the top and concluding with a 14--vertex graph, $C_{14}$ (Cousin~18), at bottom. Edges join parent to child.
The numbering of the cousins is somewhat arbitrary:
it reflects the order in which these graphs were constructed
via $\ty$ and $\yt$ moves by our algorithm.
Note that Cousin~9 is labeled $E_9$ in \cite{Ma}, 
and Cousins~16 and 20 are labeled $G_6$ and $G_7$ in \cite{FN}.

Kohara and Suzuki~\cite{KS} earlier described $K_7$ and its 13 descendants.
None of the six remaining cousins, 9, 14, 16, 17, 19, and 20 are IK. 
This follows as Cousins~17 and 19 have unknotted embeddings, as shown in Figure~\ref{figC179},
and Cousins~9, 14, 16, 20 are ancestors of Cousins~17 and 19.
(The unknotted embeddings of Figure~\ref{figC179} were derived
from the unknotted embedding of Cousin~20 that appears as
Figure~2 in \cite{FN}.)
Thus, the $K_7$ family yields no new examples of MMIK graphs.
This has also been shown, independently,
by Hanaki, Nikkuni, Taniyama, and Yamazaki \cite{hnty}.

We remark that $E_9$ (Cousin~9), a graph on nine vertices
and 21 edges, is 
the smallest graph that is not IK but has an IK child.
Indeed, it follows from \cite{Ma} that descendants of a 
non-IK graph on fewer edges or fewer vertices would be $2$-apex and,
therefore, not IK by Lemma~\ref{lem2ap}. Descendants of a graph
on 20 or fewer edges also have 20 or fewer edges and, so, are $2$-apex.
As for graphs on eight vertices, the non-IK examples with 21 or more edges
are all subgraphs of two graphs, $G_1$ and $G_2$, on eight vertices and
25 edges (see \cite[Figure 7]{Ma}). As all descendants of these 
two graphs are $2$-apex, the same is true of descendants of any 
subgraphs of $G_1$ or $G_2$.

It turns out that by adding one edge to $E_9$ one can obtain a MMIK graph;
we call this graph $E_9 + e$ and describe its family in Section~\ref {sectionE9eFamily}.

\section{The $K_{3,3,1,1}$ Family}

\begin{figure}[hb]
\begin{center}
\includegraphics[scale=0.77]{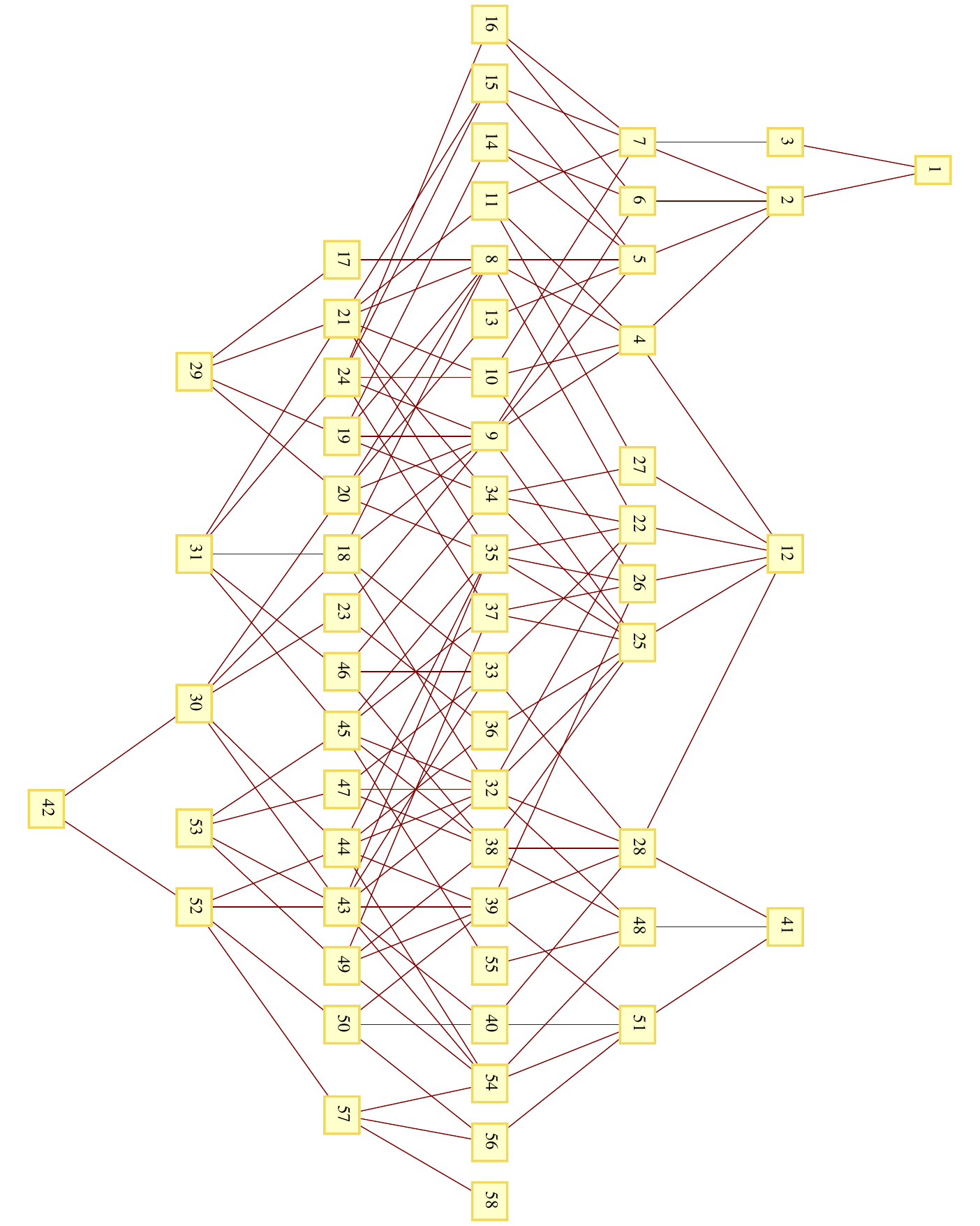}
\caption{The $K_{3,3,1,1}$ family.}\label{figK3311F}
\end{center}
\end{figure}

Figure~\ref{figK3311F} (produced using Mathematica) shows the 58 graphs derived from $K_{3,3,1,1}$ by $\ty$ and $\yt$ moves.
Edge lists for these graphs can be found in the Appendix~\cite{GMN}.
The graphs range from the 8 vertex graph $K_{3,3,1,1}$ (Cousin 1) through the 14 vertex graph Cousin 42 (called $R_1$ in \cite {OT}). 
Kohara and Suzuki~\cite{KS} described the graph $K_{3,3,1,1}$ and its 25 descendants. 
These 26 graphs were already known to be MMIK~\cite{F,KS}. As we will now show, the remaining 32 graphs in the family
are also MMIK.

\begin{prop} The 58 graphs in the $K_{3,3,1,1}$ family are all MMIK.
\end{prop}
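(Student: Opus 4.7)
The 26 cousins already known to be MMIK by Kohara--Suzuki \cite{KS} (namely $K_{3,3,1,1}$ and its 25 $\ty$-descendants) need no new argument, so it suffices to show MMIK for the remaining 32 cousins. For each such $G$ the plan is to (i) certify that $G$ is IK, and then (ii) promote IK to MMIK via Lemma \ref{lemYTMMIK}, which requires identifying an MMIK $\ty$-child of $G$.

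Step (ii) should be routine once IK is in hand. By construction, each of the 32 new cousins has at least one $\ty$-child inside the family --- namely, whichever graph yielded it under the $\yt$ move that first produced it. I would process the 32 new graphs in an order in which each already has a $\ty$-child that has been certified MMIK --- the 26 Kohara--Suzuki graphs furnish the base case --- and then inductively apply Lemma \ref{lemYTMMIK}. Since the family is finite and the $\ty$-descendant relation eventually terminates at the known MMIK descendants of $K_{3,3,1,1}$, such an ordering exists and amounts to reading off a suitable child from the family diagram in Figure \ref{figK3311F}.

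Step (i) is the main obstacle. Every cousin has the same 22 edges, so no proper minor of any cousin can be another cousin, ruling out the most obvious minor argument; moreover \cite{FN} explicitly shows that $\yt$-parents of MMIK graphs need not be IK, so no general theorem of the ``parents stay IK'' type is available. My plan is to try, in order: (a) find a uniform smaller IK minor --- for instance $K_7$ (21 edges) --- that is shared by all 32 graphs; (b) failing a uniform choice, exhibit a small IK minor case by case; (c) as a last resort, invoke the $D_4$-minor computer program of \cite{mn} referenced in the introduction to certify IK of each of the 32 directly. Once IK is secured for all 32, the inductive application of Lemma \ref{lemYTMMIK} from step (ii) closes the proof.
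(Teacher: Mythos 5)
There is a genuine gap, and it sits exactly where the paper has to work hardest. Your step (ii) inducts downward-to-upward via Lemma~\ref{lemYTMMIK}, which promotes a graph to MMIK only if it has an MMIK \emph{child}. The base cases of such an induction are necessarily the \emph{childless} (triangle-free) cousins, and your claim that every one of the 32 new cousins ``has at least one $\ty$-child inside the family, namely whichever graph yielded it'' is false: a cousin first produced by a $\ty$ move was yielded by its \emph{parent}, and a childless cousin has no children at all. The family has four childless cousins (29, 31, 42, 53). Three of them are descendants of $K_{3,3,1,1}$ and hence already covered by Kohara--Suzuki and Foisy, but Cousin~53 is childless \emph{and} is one of the 32 new graphs, so no ordering of the kind you describe exists and your induction never certifies it. The paper must prove Cousin~53 is MMIK directly (Lemma~\ref{lemC53}): for each of the 12 edge-orbits under its involution it exhibits either an embedding with a unique knotted cycle through $e$ (so that $G-e$ or $G/e$ embeds unknottedly) or a $2$--apex certificate, invoking Lemma~\ref{lem2ap}. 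This direct minimality argument is the substantive content of the proposition and is absent from your plan.

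Two secondary points. First, in step (i) your options (a) and (b) cannot succeed: if these 32 graphs are indeed MMIK they have no proper IK minors whatsoever, so no smaller IK minor ($K_7$ or otherwise) can be found; only your fallback (c), the computer certification of \cite{mn}, is viable, and that is what the paper uses. Second, you can save most of that work: since every child of an IK graph is IK, it suffices to certify IK for the \emph{parentless} cousins only --- the paper checks just three new ones (Cousins 12, 41, 58) by computer, with Foisy's theorem covering $K_{3,3,1,1}$ --- rather than running the program on all 32.
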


\begin{proof}
We first observe that all graphs in the family are IK. For this, it suffices to show that the four parentless cousins, 1, 12, 41, and 58, are
intrinsically knotted. Foisy~\cite{F} proved this for Cousin 1, $K_{3,3,1,1}$.
We handle the remaining three graphs by using the computer program described in \cite{mn}
to verify that in every embedding of the graph
there is a $D_4$ minor that contains a knotted Hamiltonian cycle.

Having established that all graphs in the family are IK, by Lemma~\ref{lemYTMMIK}, we can conclude that they are all MMIK once we've shown
this for the four childless cousins, 29, 31, 42, and 53. We do know that 
descendants of $K_{3,3,1,1}$ are MMIK. This combines work  
of Kohara and Suzuki~\cite{KS} (who argued that, if $K_{3,3,1,1}$ is MMIK,
then all of its descendants are too)
and Foisy~\cite{F} (who proved that $K_{3,3,1,1}$ is MMIK). As cousins 29, 31, and 42 
have $K_{3,3,1,1}$ as an ancestor, the following lemma completes the argument.
\end{proof}

\begin{figure}[ht]
\begin{center}
\includegraphics[scale=0.6]{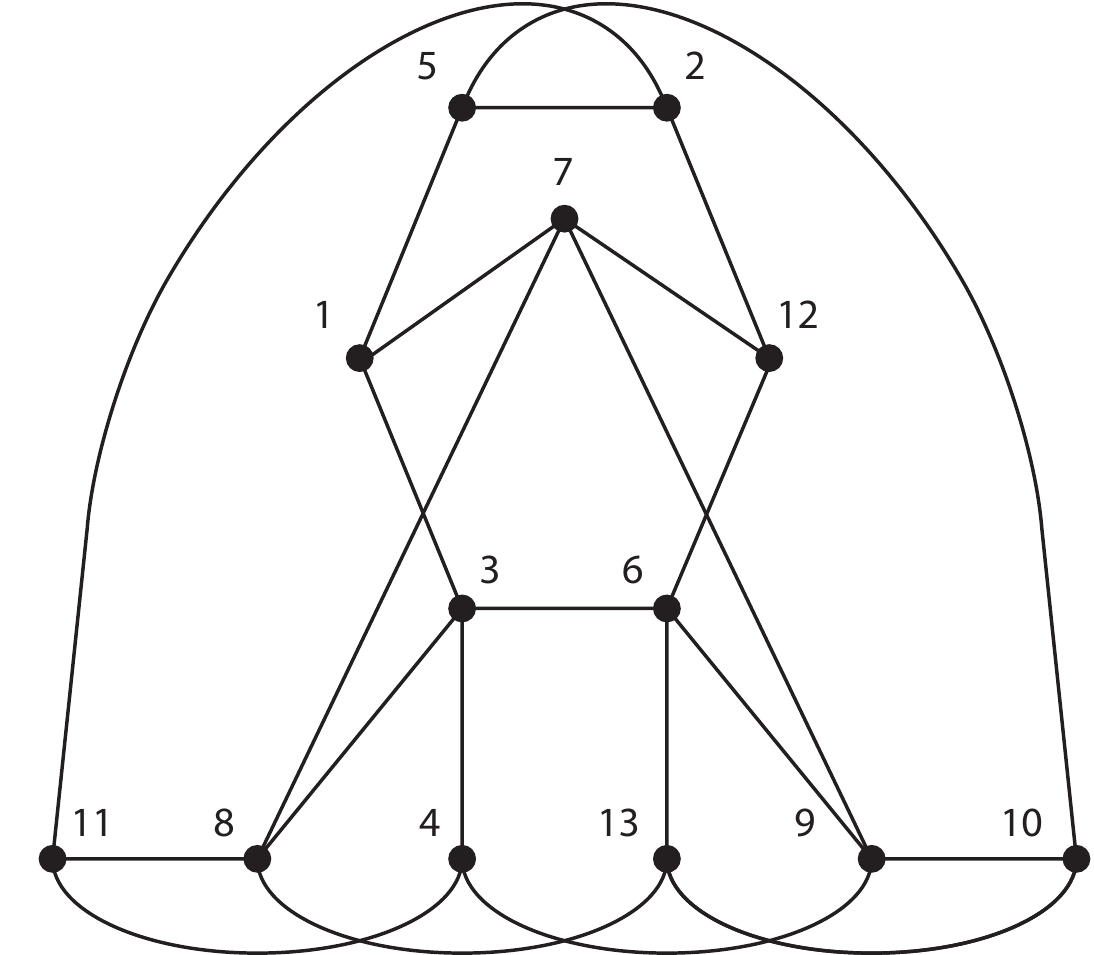}
\caption{Cousin 53 of the $K_{3,3,1,1}$ family.}\label{figC53symm}
\end{center}
\end{figure}

\begin{lemma} \label{lemC53}%
Cousin 53 (Figure~\ref{figC53symm}) of the $K_{3,3,1,1}$ family is MMIK.
\end{lemma}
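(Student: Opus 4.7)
The plan is to leverage Lemma~\ref{lem2ap} to verify minor minimality directly, since the Proposition has already established that Cousin 53 is IK. Thus the only remaining task is to show that every proper minor of Cousin 53 fails to be IK, and for this it suffices to check that, for each edge $e$, both $G-e$ and the contraction $G/e$ are $2$-apex.

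First I would exploit the symmetry displayed in Figure~\ref{figC53symm} to cut down on cases: by identifying the automorphism group of Cousin 53 and grouping its edges into orbits, the verification reduces from checking all edges to checking only a handful of representatives (one per orbit for deletion, and one per orbit for contraction). A similar orbit reduction often applies to vertex pairs, which helps guide the search for planarizing pairs.

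Next, for each representative edge $e$, I would exhibit a pair of vertices $\{u,v\}$ such that $(G-e) - \{u,v\}$ is planar, and likewise a (possibly different) pair witnessing $2$-apexness of $G/e$. Planarity of each resulting small graph can be certified by a direct plane drawing, or by appealing to Kuratowski's theorem to confirm the absence of a $K_5$ or $K_{3,3}$ minor. Once each such pair is exhibited, Lemma~\ref{lem2ap} immediately gives that the corresponding minor is not IK, completing the MMIK verification.

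The main obstacle will be the combinatorial case analysis: the graph has enough edges that even after symmetry reduction there remain several orbits, and for each deletion and contraction one must locate a correct pair of apex vertices. Finding these pairs is essentially a search problem, but the symmetric presentation of Cousin 53 suggests that the apex pairs in different orbits can be chosen in a uniform way (for instance, pairs lying in the ``dense'' portion of the graph), which should make the checks routine once the right pattern is spotted. If a clean uniform choice fails for some edge class, one can fall back on case-by-case verification, which is still finite and mechanical.
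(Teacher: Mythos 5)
Your strategy reduces the whole verification to the single claim that $G-e$ and $G/e$ are $2$--apex for every edge $e$, and that claim is nowhere justified. Lemma~\ref{lem2ap} gives ``$2$--apex $\Rightarrow$ not IK,'' but it gives you nothing when a minor fails to be $2$--apex, and your only fallback (``case-by-case verification'') is still a search for planarizing vertex pairs, i.e., it still presupposes $2$--apexness. This is not a hypothetical worry: elsewhere in this same paper, for Cousin 1151 of $G_{9,28}$, two of the $56$ edge-minors ($G/(5,10)$ and $G/(5,16)$) are explicitly \emph{not} $2$--apex, and the authors must instead exhibit knotless embeddings and check all cycles. Each minor of Cousin 53 has $21$ edges, which is above the $20$-edge threshold of \cite{Ma}, so there is no general theorem guaranteeing $2$--apexness, and your proposal does not verify it for even one representative edge. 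If any single minor turns out not to be $2$--apex, your argument has no way to conclude.

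The paper's proof in fact uses the $2$--apex criterion only for four of the (up to the involution) $24$ minors. The bulk of the work is done differently: three explicit embeddings of Cousin 53 are constructed, each containing exactly one knotted cycle, so that deleting any edge on that cycle yields an unknotted embedding of $G-e$, and contracting an edge of that cycle splits it into two unknotted cycles, yielding an unknotted embedding of $G/e$. That technique is precisely the tool you would need for any minor that is not $2$--apex, and it is absent from your plan. Your symmetry reduction (using the involution to cut $44$ minors down to $24$) matches the paper and is fine; the gap is that the core of your argument rests on an unestablished, and possibly false, structural assumption about all the minors.
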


\begin{proof}
Let $G$ denote Cousin 53. As in the proof above,
all graphs in the family are IK, including $G$.
Since $G$ has no isolated vertices, it
will be enough to show that $G - e$ and $G/e$
have knotless embeddings for every edge $e$ in $G$.

As in Figure~\ref{figC53symm}, the graph has an involution  $(1,12)(2,5)(3,6)(4,13)(8,9)(10,11)$.
This allows us to identify the 22 edges in pairs
with the exception of the edges $(2,5)$ and $(3,6)$
(which are fixed by the involution).
Thus, up to symmetry,
there are 12 choices for the edge $e$ and 24 minors ($G-e$ or $G/e$) to investigate.

\begin{figure}[ht]
\begin{center}
\includegraphics[scale=0.6]{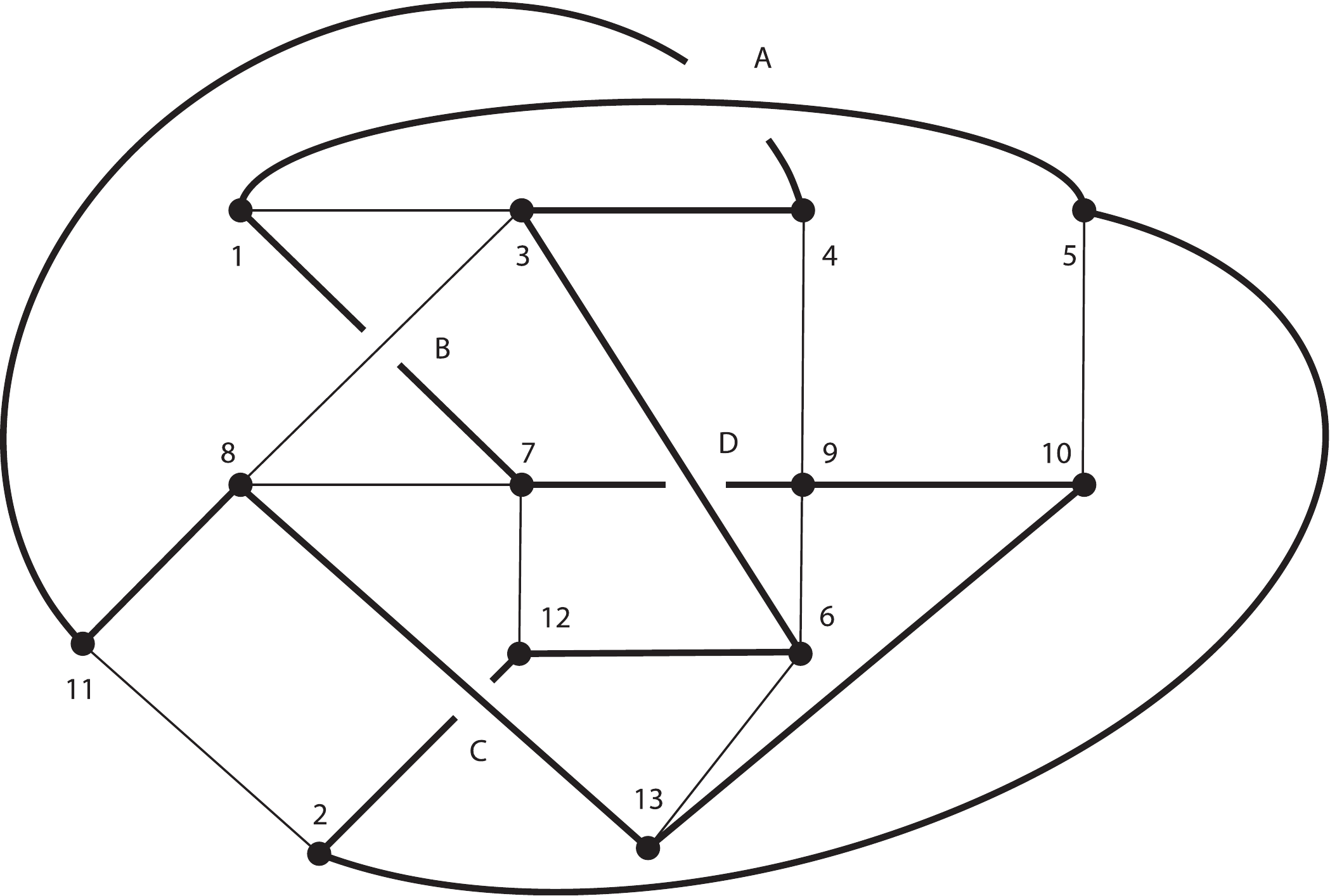}
\caption{An embedding of Cousin 53 which has a unique knotted cycle (in bold).}\label{figC53}
\end{center}
\end{figure}

The argument is based primarily on the embedding of $G$ shown in Figure~\ref{figC53}, for which 
there is a single knotted cycle $(1,5,2,12,6,3,4,11,8,13,10,9,7,1)$,
as well as four crossings, labeled $A$, $B$, $C$, $D$ in the figure. By flipping (i.e., interchanging the over- and undercrossing arcs) at selected crossings, we construct two additional embeddings, each having a unique knotted cycle. Let's call the representation of $G$ shown in the figure Embedding 1.
If we flip the crossing A, we have what we will call Embedding 2
whose unique knotted cycle is $(1, 3, 6, 12, 2, 11, 4, 9, 7, 8, 13, 10, 5, 1)$. For Embedding 3,
we flip the crossings A and C, which gives the knotted cycle
$(1, 3, 6, 13, 8, 11, 4, 9, 7, 12, 2, 5, 1)$.


Out of the 12 choices for an edge $e$, all but one occurs as an
edge either in the  knotted cycle of Embedding 1 or else in that of Embedding 2.
For each such $e$, this gives an unknotted embedding of $G-e$. The
remaining possibility is $e = (6,9)$ (or, equivalently, $(3,8)$).
In this case, deleting vertices $2$ and $3$ from $G-(6,9)$ results in a planar graph
(this is not obvious from Figures~\ref{figC53symm} or \ref{figC53}, but is easy to verify manually or using Mathematica).
Therefore, $G-(6,9)$ is $2$--apex and, by Lemma~\ref{lem2ap}, not IK.
Thus, no minor of the form $G - e$ is IK.

If we contract the edge $e = (1,3)$ in Embedding 1 (shown in Figure~\ref{figC53}), the single knotted cycle becomes two cycles that share the
new vertex formed by identifying vertices $1$ and $3$.  Since
these two cycles are unknots, this is an unknotted
embedding of $G/e$. Similarly, contracting either edge $e = (4,9)$ or
$e = (5,10)$ in Embedding 1 leads to an unknotted embedding of $G/e$.
Embedding 2 shows that $G/e$ is unknotted for $e = (1,7)$, $(2,5)$,
$(6,9)$, and $(8,11)$, while Embedding 3 does for $e = (3,4)$ and $(7,8)$.

For each of the remaining three choices of $e$, we give vertices that,
when deleted from $G/e$, yield a planar graph,
showing that $G/e$ is $2$--apex:
for $e = (1,5)$, delete vertices $4$ and $6$;
for $e = (3,6)$, delete vertices $2$ and $7$;
for $e = (4,11)$, delete vertices $5$ and $6$.
Thus, by Lemma~\ref{lem2ap}, none of these graphs
is IK, completing the argument for the $G/e$ minors.

As no $G-e$ nor $G/e$ minor is IK, we conclude that $G$ is MMIK.
\end{proof}

\section{The $E_9+e$ family}
\label{sectionE9eFamily}
The graph $E_9+e$ (see Figure~\ref{figE9p1}) has nine vertices and 22 edges and is formed by adding the edge $(3,9)$ to $E_9$.
The $E_9+e$ family consists of 110 cousins; due to its large size,
we do not provide here a diagram for the entire family, but only a partial diagram, as explained further below.
Edge lists for all 110 cousins, as well as a diagram of the entire family,
can be found in the Appendix~\cite {GMN}.
The family includes two 8-vertex graphs: the graph whose complement consists of two stars, each on four vertices (see Figure 4vi of \cite{CMOPRW}) and $K_{3,2,1,1,1} - {(b_1,c),(b_1,d)}$, whose complement is a
triangle and a star of four vertices.
We refer the reader to \cite{CMOPRW} for an explanation of this
notation along with a proof that these two graphs are IK (also proved,
independently, in \cite{BBFFHL}). Note that neither is MMIK. The two star graph has $K_7$ as a minor while $K_{3,2,1,1,1} - {(b_1,c),(b_1,d)}$ has $H_8$ (the graph obtained by a $\ty$ move  on $K_7$) as a minor.

\begin{figure}[ht]
\begin{center}
\includegraphics[scale=0.7]{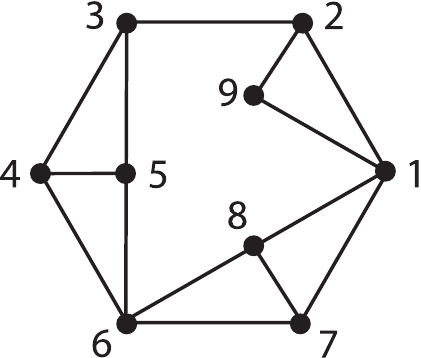}
\caption{The complement of the graph $E_9+e$.}\label{figE9p1}
\end{center}
\end{figure}

The family also
includes three other parentless graphs, all on ten vertices, which we call Cousins 41, 47, and 50. We can describe these graphs by listing their edges:

\noindent%
Cousin 41: $(1, 3), (1, 4), (1, 5), (1, 6), (2, 5), (2, 6), (2, 8), (2, 10), (3, 6), (3, 7), (3, 8)$,\\ $(3, 9), (4, 8), (4, 9), (4, 10), (5, 7), (5, 9), (6, 9), (6, 10), (7, 9), (7, 10), (8, 9)$.

\noindent%
Cousin 47: $(1, 3), (1, 4), (1, 5), (1, 6), (2, 5), (2, 6), (2, 8), (2, 10), (3, 6), (3, 7), (3, 8)$, \\ $(4, 8), (4, 9), (4, 10), (5, 7), (5, 8), (5, 9), (5, 10), (6, 9), (7, 9), (7, 10), (8, 9)$.

\noindent%
Cousin 50: $(1, 3), (1, 4), (1, 5), (1, 6), (1, 10), (2, 5), (2, 6), (2, 8), (2, 10), (3, 6), (3, 7)$, \\ $(3, 8), (3, 9), (4, 8), (4, 9), (4, 10), (5, 7), (5, 8), (5, 9), (6, 9), (7, 9), (7, 10)$.

To show that all graphs in the $E_9+e$ family are IK,
it's enough to check that all the parentless graphs in the family are IK.
We've explained why the two 8-vertex parentless graphs are IK.
The program of \cite{mn} shows that 
the four other parentless graphs are IK.

\begin{figure}[ht]
\begin{center}
\includegraphics[scale=0.7]{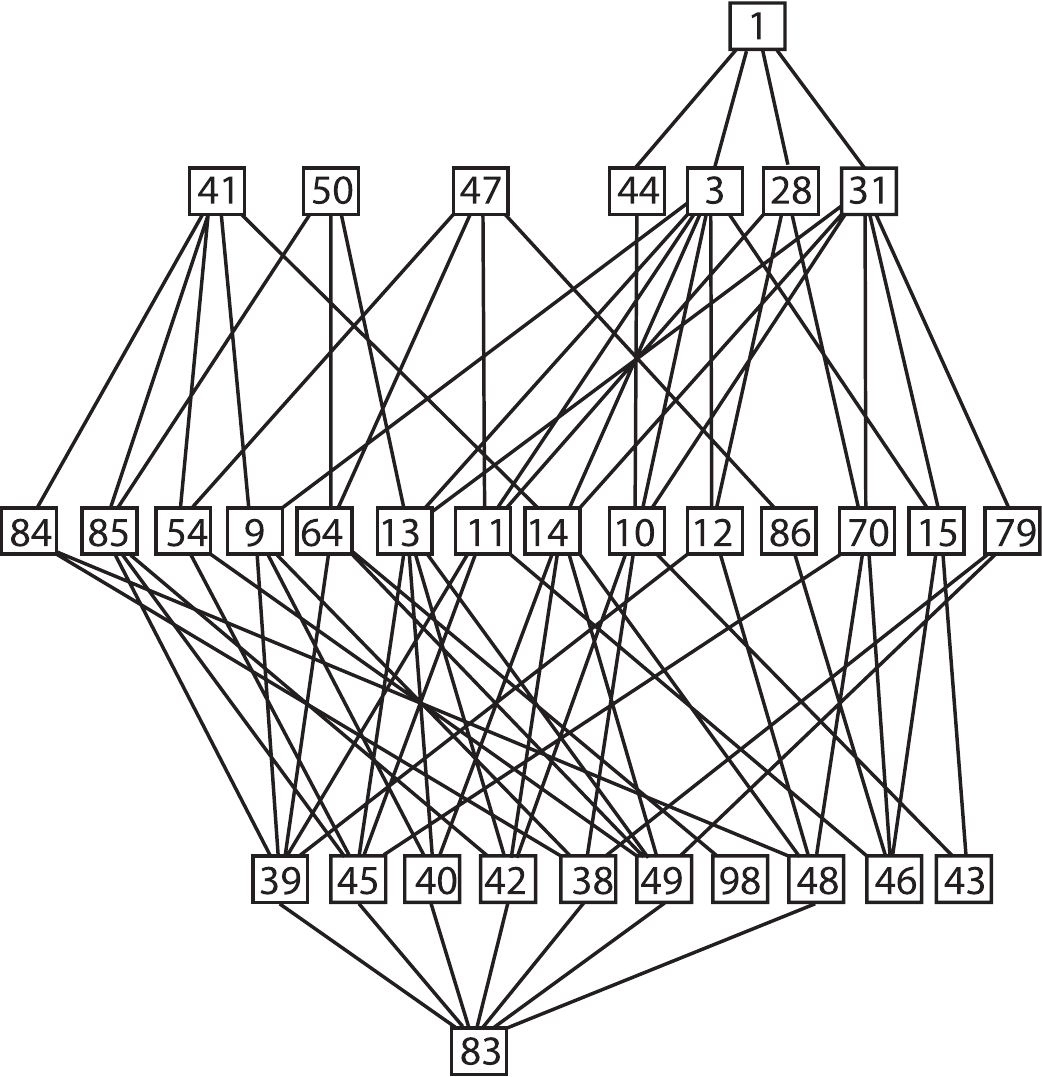}
\caption{The MMIK cousins of $E_9+e$.}\label{figE9p1MM}
\end{center}
\end{figure}

Of the 110 graphs in the family,
only 33 are MMIK;
they are shown in Figure~\ref{figE9p1MM}.
These 33 graphs are the ancestors of Cousins~43, 46, 83, and 98; 
they include $E_9+e$ (Cousin 1)
as well as the three parentless 10-vertex graphs, Cousins 41, 47, and 50. The other graphs in the family are all descendants of the two 8-vertex graphs. As the 8-vertex graphs are not MMIK, it follows, by Lemma~\ref{lemYTMMIK}, that the remaining 77 graphs in the family are not MMIK.

The following lemma shows that Cousin 83 and, hence, its 28 ancestors are MMIK. We omit the similar arguments which show that Cousins 43, 46, and 98 are also MMIK.

\begin{figure}[ht]
\begin{center}
\includegraphics[scale=0.35]{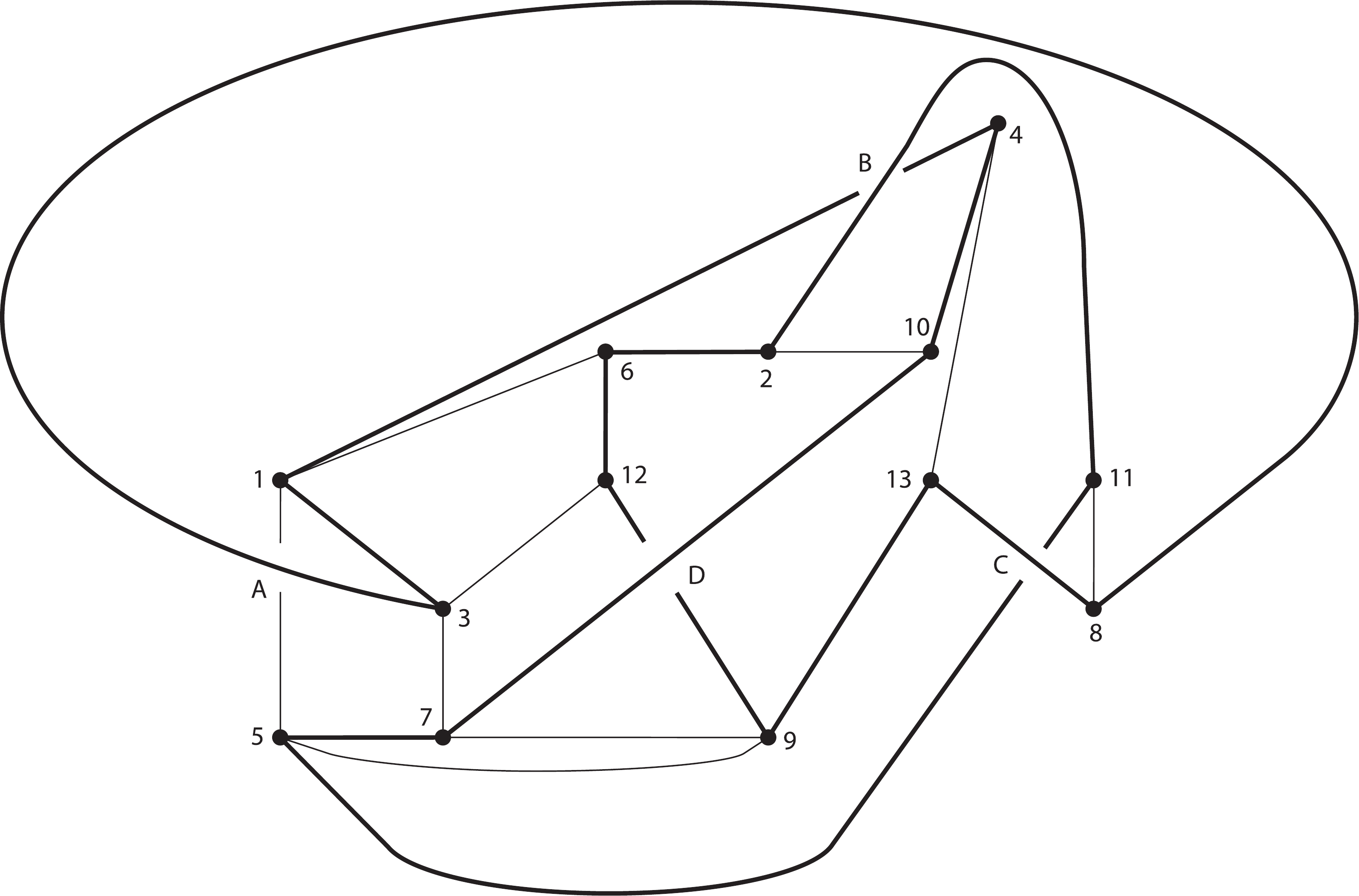}
\caption{An embedding of Cousin 83.}\label{figC83}
\end{center}
\end{figure}

\begin{lemma}
\label{lemma-cousin83}
Cousin 83 (Figure~\ref{figC83}) of the $E_9+e$ family is MMIK.
\end{lemma}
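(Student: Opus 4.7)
The plan mirrors the proof of Lemma~\ref{lemC53}. Let $G$ denote Cousin 83. Since every graph in the $E_9+e$ family has already been shown to be IK, and $G$ has no isolated vertices, it suffices to verify that for each edge $e$ of $G$, neither $G-e$ nor $G/e$ is IK. For each such minor I will either exhibit a knotless embedding (certifying that the minor is not IK) or show that it is $2$-apex and invoke Lemma~\ref{lem2ap}.

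The first step is to inspect the embedding in Figure~\ref{figC83} and identify its set of knotted cycles; by analogy with the Cousin 53 case, I expect a unique knotted cycle, likely Hamiltonian. Every edge $e$ lying on that cycle immediately yields an unknotted embedding of $G-e$, since deleting an edge of the only knotted cycle destroys the one knot. For edges $e$ that do not lie on this cycle, I will produce auxiliary embeddings by flipping selected crossings of Figure~\ref{figC83}: each flip reshuffles which Hamiltonian cycles are knotted, and I expect that one or two flipped embeddings will each still contain a single knotted cycle, chosen so that together with the original they cover all of the missing edges.

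Contractions are handled in the same spirit. If an edge $e$ lies on the unique knotted cycle of one of these embeddings, then contracting $e$ pinches that cycle at its endpoints, splitting it into two loops that share the identified vertex; when both loops are unknots, the contracted embedding is knotless, so $G/e$ is not IK. Before running through this casework I will look for automorphisms of $G$ and identify edges in orbits, reducing the number of distinct cases just as the involution of Cousin 53 roughly halved the work there.

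For any edge $e$ that resists the embedding-based arguments, I fall back on Lemma~\ref{lem2ap} by exhibiting two vertices whose deletion from $G-e$, or respectively $G/e$, leaves a planar graph; planarity of each resulting graph can be verified by hand or with Mathematica. The main obstacle is the combinatorial bookkeeping: finding a small collection of crossing-flip variants of the figure's embedding whose knotted cycles jointly cover every edge of $G$, and pinpointing the correct pairs of deletion vertices for the remaining $2$-apex checks. The fact that the identical strategy closed out Cousin 53 using only three embeddings and a handful of $2$-apex verifications gives good reason to expect the same approach will suffice for Cousin 83.
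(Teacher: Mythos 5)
Your plan is essentially the paper's proof: the paper uses the embedding of Figure~\ref{figC83} with its unique knotted Hamiltonian cycle, produces four more embeddings by flipping crossings (so five embeddings in total, a bit more than the ``one or two'' you anticipate), covers all but one deletion and thirteen of the contractions by the edge-on-the-unique-knotted-cycle argument, and disposes of the leftover deletion $G-(7,9)$ and eight leftover contractions via the $2$-apex criterion of Lemma~\ref{lem2ap}. Two small discrepancies with your plan: first, Cousin 83 has no symmetries, so the automorphism reduction that halved the work for Cousin 53 is unavailable and all $44$ minors must be treated individually; second, and more substantively, one contraction minor, $G/(6,12)$, is neither split into unknots by any of the five embeddings nor $2$-apex, so your stated fallback (``for any edge that resists the embedding-based arguments, I fall back on Lemma~\ref{lem2ap}'') would fail on it. The paper closes that case by observing that $G/(6,12)$ is isomorphic to Cousin 19 of the $K_7$ family, which was already shown to have an unknotted embedding (Figure~\ref{figC179}); this fits under your broader ``exhibit a knotless embedding'' umbrella, but it requires recognizing the minor as a previously analyzed non-IK graph rather than either of the two concrete mechanisms you describe.
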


\begin{proof}
The argument is similar to that of Lemma~\ref{lemC53} so we will omit some of the details.
Let $G$ denote Cousin 83.
As $G$ has no symmetries,
the 44 minors obtained by removing or contracting each of the 22 edges
are pairwise non-isomorphic.
We will demonstrate that
none of the 44 graphs $G-e$, $G/e$ are IK.
Figure~\ref{figC83} shows Embedding 1 with its unique knotted cycle $(1,3,8,13,9,12,6,2,11,5,7,10,4,1)$. By flipping crossings we obtain four other embeddings, each with a unique knotted cycle:\\
Embedding 2 (flip B): $(1,4,13,9,12,3,8,11,2,10,7,5,1)$;\\
Embedding 3 (flip A \& B): $(1,5,11,2,10,7,3,8,13,9,12,6,1)$;\\
Embedding 4 (flip C): $(1,4,10,7,3,8,13,9,12,6,2,11,5,1)$; \\
Embedding 5 (flip D): $(1,4,13,8,3,7,10,2,11,5,9,12,6,1)$.

All but one edge $e$ appears in one of the five cycles. The corresponding
embedding shows that $G-e$ is not IK.
For the remaining edge $e = (7,9)$, note that removing vertices 2 and 3
from $G-e$ results in a planar graph.
So, by Lemma~\ref{lem2ap}, $G-(7,9)$ is not IK.
This completes the argument that no minor of the form $G-e$ is IK.

For 13 of the 22 edges,
contracting the edge $e$ turns the unique cycle in at least one
of the five embeddings into two unknotted cycles, showing 
that $G/e$ is not IK. 
For eight of the remaining nine edges, 
(namely $(1,4)$, $(2,11)$, $(3,8)$, $(3,12)$, $(5,11)$, $(7,9)$, $(7,10)$,  and $(9,12)$)
$G/e$ is $2$--apex and, therefore, not IK. Finally, since $G/(6,12)$ is isomorphic to Cousin 19 of the $K_7$ family, it
too is not IK. This completes the argument for minors of the form
$G/e$ and the proof of the lemma.
\end{proof}

\textit{Remark}.
Although it preserves IKness, 
the $\ty$ move  doesn't necessarily preserve MMIKness.
Indeed, Cousin~83, 
which is MMIK by Lemma~\ref{lemma-cousin83},
has a nonMMIK child, Cousin~87.
(As a descendant of both of the nonMMIK 8-vertex graphs in this family, Cousin~87 is also nonMMIK by Lemma~\ref{lemYTMMIK};
an edge list for Cousin~87 can be found in our Appendix~\cite{GMN}).

\section{The $G_{9,28}$ family}

The graph $G_{9,28}$ has nine vertices and 28 edges. It's most easily described in terms of its complement, which is the disjoint union of a $7$--cycle on the vertices $1, 2, \ldots, 7$ and the edge $(8, 9)$.

The $G_{9,28}$ family,  listed fully in \cite{GMN}, consists of 1609 cousins,
 25 of which, including $G_{9,28}$ itself, are parentless.
The remaining cousins are descendants of one or more of these 25 parentless graphs.
We used the computer program of \cite{mn}
to verify that each of these 25 parentless graphs is IK;
hence all 1609 cousins are IK.
Here we give a ``traditional proof'' that $G_{9,28}$ is IK.
We note that $G_{9,28}$ and its descendants account for 1062
of these 1609 cousins;
thus these 1062 graphs are IK even without the ``computer proof.''

We also show in this section that Cousin 1151 of $G_{9,28}$,
which is a descendant of $G_{9,28}$, is MMIK.
Cousin 1151 and its ancestors form a set of 156 graphs
(only 101 of which are $G_{9,28}$ or its descendants).
Thus all of these 156 graphs are MMIK.

\subsection{$G_{9,28}$ is IK}
\label{sectionG28}

\begin{figure}[ht]
\begin{center}
\includegraphics[scale=1]{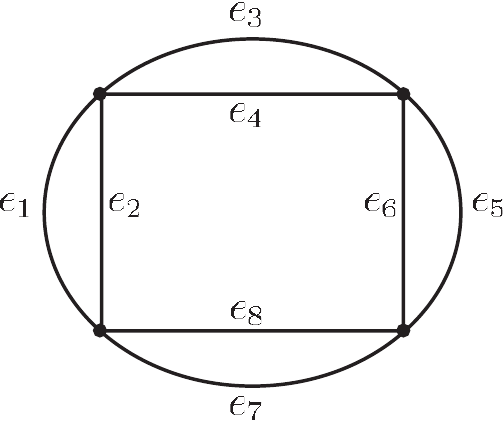}
\caption{The graph $D_4$.}\label{figD4}
\end{center}
\end{figure}

In this subsection, we show that $G_{9,28}$ is IK. We'll make use of
a lemma due independently to Foisy~\cite{F} and Taniyama and Yasuhara~\cite{ty}, which we restate here.
Figure~\ref{figD4} shows the multigraph $D_4$.
For each $i = 1,2,3,4$,
let $C_i$ denote the cycle consisting of the two edges
$e_{2i-1}$ and $e_{2i}$.
For any given embedding of $D_4$,
let $\sigma$ denote the mod~2
sum of the Arf invariants of the $16$ Hamiltonian cycles in that
embedding of $D_4$.
($\Arf(K)$ equals the reduction modulo 2 of the second coefficient of the Conway polynomial of $K$.)
Since the unknot has Arf invariant zero,
if $\sigma \neq 0$ there must
be a nontrivial knot in  the embedding. The lemma shows that this will happen
whenever the mod~2 linking numbers, $\mbox{lk} (C_i, C_j)$, of both
pairs of opposing cycles are non-zero.

\begin{lemma}\cite{F,ty}
\label{D4-lemma}
 Given an embedding of the graph $D_4$, $\sigma \neq 0$  if and only if $\mbox{lk} (C_1, C_3) \neq 0$ and $\mbox{lk} (C_2, C_4) \neq 0$.
\end{lemma}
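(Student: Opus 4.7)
The plan is to prove the stronger mod-$2$ identity
\[
\sigma \equiv \lk(C_1,C_3)\cdot\lk(C_2,C_4) \pmod{2},
\]
from which the biconditional is immediate, since the right-hand side is odd precisely when both factors are. So the goal is to get an explicit formula for $\sigma$ and then show that it collapses to this product.

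First I would fix notation by indexing the sixteen Hamiltonian cycles of $D_4$ by vectors $\epsilon = (\epsilon_1,\epsilon_2,\epsilon_3,\epsilon_4)\in\{0,1\}^4$, where $\epsilon_i$ records which edge of the pair $C_i = \{e_{2i-1},e_{2i}\}$ is used, and write $K_\epsilon$ for the resulting embedded knot. Then $\sigma = \sum_\epsilon \Arf(K_\epsilon) \pmod{2}$.

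The central tool is a classical arc-swap identity for the Arf invariant: if two knots $K$ and $K'$ share a common arc $J$ and differ only in a pair of arcs $\beta,\beta'$ with matching endpoints (so that $J,\beta,\beta'$ form a spatial theta curve), then $\Arf(K)+\Arf(K')$ modulo $2$ can be written in terms of mod-$2$ linking data of $\beta\cup\beta'$ with a companion cycle built from $J$ (this is the theta-graph formula underlying the Taniyama--Yasuhara $\alpha$-invariant). I would apply this identity to each of the eight pairs $(K_{(0,\epsilon_2,\epsilon_3,\epsilon_4)},K_{(1,\epsilon_2,\epsilon_3,\epsilon_4)})$, in which $\beta\cup\beta' = C_1$ and $J$ is the three-edge path $P_{\epsilon_2\epsilon_3\epsilon_4}$ through $C_2$, $C_3$, $C_4$. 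Summing the resulting eight equations gives an expression for $\sigma$ modulo $2$ as $\sum_{\epsilon_2,\epsilon_3,\epsilon_4}\lk(C_1,P_{\epsilon_2\epsilon_3\epsilon_4})$ (plus possibly an even number of self-linking contributions that vanish).

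I would then expand each $P_{\epsilon_2\epsilon_3\epsilon_4}$ as a sum of individual edges drawn from $C_2,C_3,C_4$ and use bilinearity of the mod-$2$ linking number. Edges in $C_2$ and $C_4$ share an endpoint with $C_1$, so their ``linking numbers with $C_1$'' are either undefined or appear in matched pairs that cancel modulo $2$; only the contributions coming from $C_3$, which is disjoint from $C_1$, survive, and they accumulate into a factor of $\lk(C_1,C_3)$. Repeating the same argument with $C_2$ playing the role of $C_1$ (i.e., first pairing up cycles by $\epsilon_2$) produces the complementary factor $\lk(C_2,C_4)$. The main obstacle will be this bookkeeping step: tracking exactly how the theta-graph identity produces the clean linking-number form, and verifying that all cross terms and incidence-induced ambiguities really do cancel modulo $2$. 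If the direct combinatorial route proves messy, a fallback plan is to view $\sigma$ as a $\mathbb{Z}/2$-valued Vassiliev-type invariant of spatial $D_4$ embeddings, compute it on a small set of model embeddings realizing each monomial in the $\lk(C_i,C_j)$, and then force the product formula by linear algebra over $\mathbb{F}_2$.
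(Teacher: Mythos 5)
The paper does not prove this lemma at all: it is quoted verbatim from Foisy \cite{F} and Taniyama--Yasuhara \cite{ty}, so there is no in-paper argument to compare against. Judged on its own terms, your target identity $\sigma \equiv \lk(C_1,C_3)\cdot\lk(C_2,C_4) \pmod 2$ is exactly the right strengthening to aim for (it is how both cited sources phrase the result), but the route you propose to it has a genuine gap.

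The pairwise ``arc-swap identity'' you lean on is false. If $K=J\cup\beta$ and $K'=J\cup\beta'$ share the arc $J$, then $\Arf(K)+\Arf(K')$ is not determined by mod-$2$ linking data of $\beta\cup\beta'$ with cycles built from $J$: take the planar embedding of $D_4$ and tie a trefoil locally into the edge $e_1$; every linking number in sight vanishes, yet for each of the eight paths $J$ through $C_2,C_3,C_4$ you get $\Arf(J\cup e_1)+\Arf(J\cup e_2)=1$. (The genuine theta-curve invariant here is Taniyama's, which involves all three constituent knots $K$, $K'$, and $\beta\cup\beta'$, and is still not a linking number.) Moreover, even granting some corrected version of the identity, your bookkeeping cannot produce the product: all eight paths $P_{\epsilon_2\epsilon_3\epsilon_4}$ share the same endpoints, each edge of $C_2\cup C_3\cup C_4$ lies on exactly four of them, so $\sum_{\epsilon}\lk(C_1,P_{\epsilon_2\epsilon_3\epsilon_4})\equiv 0\pmod 2$ under any consistent convention. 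More fundamentally, one round of bilinear expansion can only yield expressions that are \emph{linear} in the linking numbers, whereas $\lk(C_1,C_3)\lk(C_2,C_4)$ is quadratic; the product cannot arise this way. The standard proof (Conway--Gordon style, as in \cite{F} and \cite{ty}) instead shows that the combination $\sigma+\lk(C_1,C_3)\lk(C_2,C_4)$ is unchanged by every crossing change --- using the skein relation $a_2(K_+)-a_2(K_-)=\lk(K_0)$ summed over the Hamiltonian cycles passing through that crossing, with the change in the linking-number product supplying exactly the compensating term --- and then evaluates the combination as $0$ on one explicit embedding. Your fallback plan (solving for coefficients on model embeddings) presupposes that $\sigma$ is a polynomial in the $\lk(C_i,C_j)$, which is precisely what the invariance argument establishes, so it cannot substitute for it.
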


\begin{figure}[ht]
\begin{center}
\includegraphics[scale=0.7]{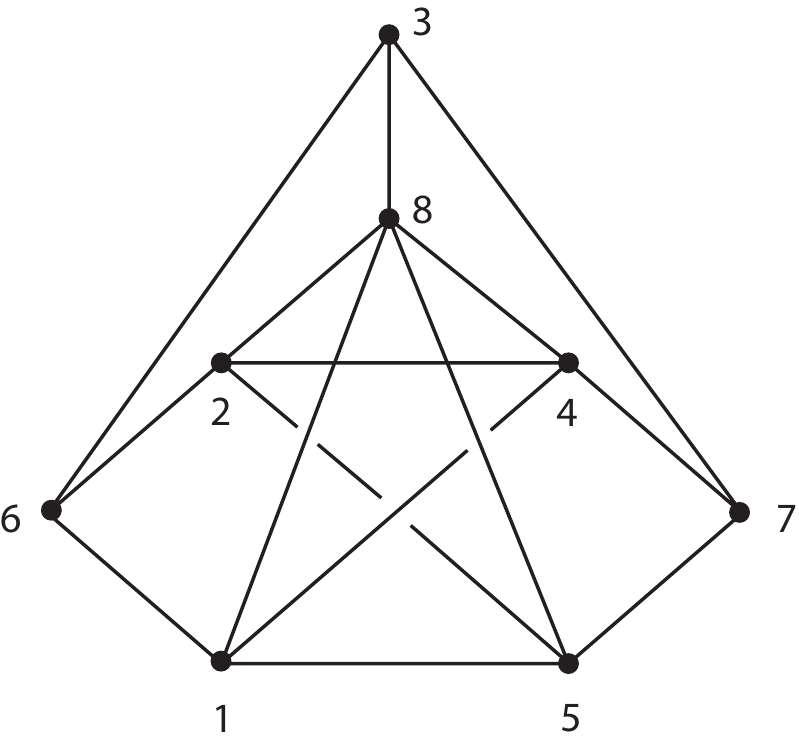}
\caption{The Petersen graph $P_8$ is realized as a subgraph, $S_1$, of $G_{9,28}$.}\label{figS1}
\end{center}
\end{figure}

\begin{prop}
The graph $G_{9,28}$ is IK.
\end{prop}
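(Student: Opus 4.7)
The plan is to apply Lemma~\ref{D4-lemma}: I will exhibit a specific $D_4$ minor of $G_{9,28}$ with the property that, in every spatial embedding, both pairs of opposite digon cycles $(C_1,C_3)$ and $(C_2,C_4)$ have odd mod-$2$ linking number. Lemma~\ref{D4-lemma} will then force $\sigma \neq 0$, so at least one of the $16$ Hamiltonian cycles of the $D_4$ minor has odd Arf invariant and is therefore nontrivially knotted, proving $G_{9,28}$ is IK.

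The main tool is the Petersen-family subgraph $S_1 \cong P_8$ of $G_{9,28}$ drawn in Figure~\ref{figS1}. Because $P_8$ lies in the Petersen family, it is intrinsically linked, so in every embedding $S_1$ contains a pair of disjoint cycles whose mod-$2$ linking number is nonzero. I would build the $D_4$ minor by choosing four branch sets $V_1, V_2, V_3, V_4 \subset V(G_{9,28})$ together with two internally-disjoint $V_i$--$V_{i+1}$ paths for each $i$, arranging matters so that $C_1 \cup C_3$ is realized as a pair of disjoint cycles inside $S_1$, while $C_2 \cup C_4$ is realized as a pair of disjoint cycles inside a second Petersen-family subgraph $S_2$ of $G_{9,28}$ (another copy of $P_8$, or perhaps a $K_{3,3,1}$ or $K_6$). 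The ninth vertex, which is not used by $S_1$, together with the edges joining $\{8,9\}$ to $\{1,\ldots,7\}$, should provide the flexibility needed to glue these two pairs together into a single $D_4$ minor.

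The principal obstacle is that intrinsic linking of $P_8$ only guarantees \emph{some} odd-linked pair of disjoint cycles, not a predetermined one. I would address this in one of two ways: by fixing the branch sets so that $C_1$ and $C_3$ are forced to be the only (up to swapping the two) pair of disjoint cycles available inside $S_1$, or by a short case analysis across all candidate disjoint pairs, appealing to the symmetry of $G_{9,28}$ inherited from the $7$-cycle in its complement to cut down on cases. Once both $(C_1,C_3)$ and $(C_2,C_4)$ are forced to be odd-linked in every embedding, Lemma~\ref{D4-lemma} finishes the proof.
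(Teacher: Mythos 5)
Your overall framework---Lemma~\ref{D4-lemma} plus the fact that every embedding of the Petersen graph $P_8$ contains a linked pair of cycles \cite{sa}---is the same as the paper's, but your execution plan has a gap that you have correctly identified yourself and then not actually closed. Sachs' theorem only guarantees that \emph{at least one} of the eight pairs of disjoint cycles in $S_1$ is linked; which one depends on the embedding and cannot be prescribed. Your first proposed fix, choosing branch sets so that $(C_1,C_3)$ is ``the only pair of disjoint cycles available inside $S_1$,'' cannot work: $S_1$ is a fixed subgraph with eight candidate pairs, and no choice of branch sets changes which of those eight pairs happens to be linked in a given embedding. Consequently there is no single predetermined $D_4$ minor whose opposite cycles are linked in every embedding; the $D_4$ minor must be chosen \emph{after} one knows which pairs are linked, and it varies from case to case.

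Your second fix, ``a short case analysis,'' underestimates what is required, and omits the key technical tool that makes the case analysis possible at all. The linked pair produced in $S_1$ and the linked pair produced in a second Petersen subgraph generally do \emph{not} sit inside a common $D_4$ minor as they stand: their cycles share vertices and edges in incompatible ways. The paper resolves this with a cycle-cutting argument (\cite[Lemma 3.1]{F}): writing a cycle in homology as a sum of two smaller cycles using an extra edge of $G_{9,28}$, and concluding that exactly one summand is linked with the partner cycle. This is what lets linking information be transported from one pair to another until two compatible linked pairs are found. Even with the $7$-cycle symmetry $\gamma$ and the involution $\delta=(1,5)(2,4)(6,7)$ cutting down cases, the paper needs all seven rotated copies $S_1,\dots,S_7$, two further copies $T_1,T_2$ obtained by swapping vertices $8$ and $9$, and a chain of implications of the form $l_{i3}\Rightarrow l_{(i+1)5}$ or $l_{(i+1)8}$, etc., that is eventually closed off into a contradiction. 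Without the cutting lemma and without this implication structure, the case analysis does not terminate, so the proposal as written does not constitute a proof.
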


\begin{proof}{
First observe that the Petersen graph $P_8$
shown in Figure~\ref{figS1}
is a subgraph of $G_{9,28}$
since the complement of $P_8$
contains the 7-cycle $(1,2,3,4,5,6,7)$
and the edge $(8,9)$;
we'll call this subgraph $S_1$.
By cyclically permuting the vertex labels $1, 2, \ldots, 7$ in Figure~\ref{figS1},
we obtain six more subgraphs, $S_2, S_3, \ldots S_7$, of $G_{9,28}$,
each isomorphic to $P_8$.
There are eight pairs of cycles in each $S_i$.
For example, the eight links in $S_1$ and $S_2$ are:

$$ \begin{array}{l||c|c|c|c}
i & l_{i1} & l_{i2} & l_{i3} & l_{i4}  \\
\hline
1 & 148,36257 & 158,36247 & 248,36157 & 258,36147 \\
2 & 258,47361 & 268,47351 & 358,47261 & 368,47251 \\
\multicolumn{5}{c}{} \\
i & l_{i5} & l_{i6} & l_{i7} & l_{i8} \\
\hline
1 & 2475,3618 & 5162,3748 & 1475,3628 & 4261,3758 \\
2 & 3516,4728 & 6273,4158 & 2516,4738 & 5372,4168
\end{array} $$

In the table, we've listed the indices of the vertices in each cycle. Thus $l_{11}$, $S_1$'s first link, consists of the cycles $(1, 4, 8, 1)$ and $(3, 6, 2, 5, 7, 3)$. We will frequently use this abbreviated notation in what follows.
Note that each link $l_{2j}$ can be obtained from the one above it, $l_{1j}$, by applying the cyclic permutation $\gamma = (1,2,3,4,5,6,7)$; we'll write $l_{2j}  = \gamma(l_{1j})$. In a similar way, we
determine the links $l_{ij}$ for each $i = 3,4,5,6,7$ by repeatedly applying $\gamma$.

Fix an arbitrary embedding of $G_{9,28}$. We wish to show that there is a knotted cycle in that embedding.
We'll argue that $G_{9,28}$ has a
$D_4$ minor embedded with opposite cycles linked. 
Using Lemma~\ref {D4-lemma}, this implies there is a knotted cycle in
the $D_4$ and we will refer to such a $D_4$ minor as a ``knotted $D_4$.''
We can then identify the knot in the $D_4$ with a knotted cycle in the given embedding of $G_{9,28}$.

As shown by Sachs~\cite{sa}, 
in any embedding of the Petersen graph $P_8$, the mod 2 sum of the linking number over the eight pairs of cycles is non-zero. 
This means that, in each $S_i$, at least one pair of cycles $l_{ij}$ has non-zero linking number mod 2. To simplify the exposition, for the remainder of this proof, we will use ``linked'' to mean ``has nonzero linking number mod 2.''

\begin{figure}[ht]
\begin{center}
\includegraphics[scale=0.5]{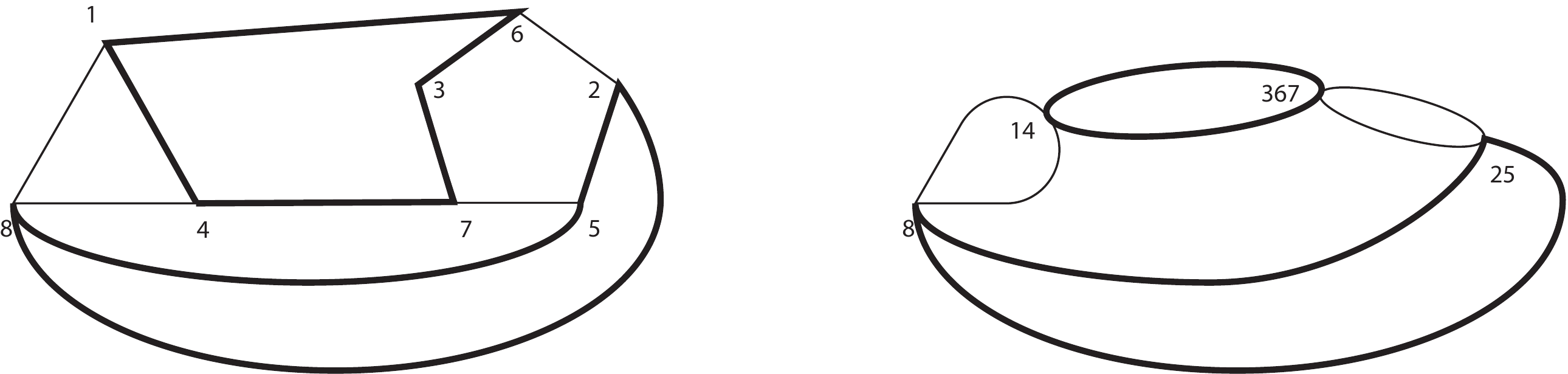}
\caption{Contracting $(1,4)$, $(2, 5)$, $(3,6)$,
and $(3,7)$ results in a $D_4$.}\label{fig1121}
\end{center}
\end{figure}

Suppose first that it's $l_{11}$ that is linked.
We'll use this to deduce that $l_{26}$ or $l_{28}$ is linked.
We'll denote this situation by writing ``$l_{11} \Rightarrow l_{26}$ or $l_{28}$.''
We will argue that any other $l_{2j}$, if linked, would result in a
knotted $D_4$ minor and therefore a knotted cycle in the given  embedding
of $G_{9, 28}$. 
If $l_{21}$ is linked, then contracting the edges $(1,4)$, $(2,5)$, $(3,6)$,
and $(3,7)$ results in a $D_4$ graph with one set of opposing cycles arising  from $l_{11}$ and the other from $l_{21}$ (see Figure~\ref{fig1121}). Assuming both pairs are linked, this results in a nontrivial knot in the $D_4$ graph by Lemma~\ref {D4-lemma} and hence a nontrivial knot in the embedding of $G_{9,28}$. So, we can assume $l_{21}$ is not linked. Similarly, if $l_{27}$ is linked, contracting $(2,5)$, $(2,6)$, $(3,7)$, and $(4,8)$ results in a $D_4$ with linked opposite cycles arising from $l_{11}$ and $l_{27}$.

Suppose that among the $l_{2j}$ pairs,
it's $l_{22}$ that is linked. 
Here, we will first decompose the cycles
of $l_{11}$ and $l_{22}$. Since $(3,5)$ is an edge of $G_{9,28}$, in homology, we can think of the cycle $36257$ (i.e., $(3, 6, 2, 5, 7, 3)$ ) as the sum $[ \gamma_2 + \gamma_3 ]$ of the cycles
$\gamma_2 = 3625$ and $\gamma_3 = 357$.
The sum is linked with the other component of $l_{11}$, $148$, so we
deduce that exactly one of $\gamma_2$ and $\gamma_3$ is also linked with
$148$ (see \cite[Lemma 3.1]{F}). We will refer to this way
of dividing $36257$ into $3625$ and $357$ as
``cutting along $35$.'' If it's $3625$ that's linked with $148$, then
contracting $(1,4)$, $(3,5)$, and $(2,6)$ results in a $D_4$ whose
opposite cycles arise from the linked cycles $3625$ and $148$ and the linked cycles of $l_{22}$. Consequently, by Lemma~\ref{D4-lemma}, the embedding of $G_{9,28}$
has a knotted cycle in this case. If instead it's $357$ that's linked with
$148$, we will need to cut the cycle $47351$ of $l_{22}$ along $13$.
This leaves two cases. If it's $4731$ that links the other component of
$l_{22}$, $268$, then after contracting $(1,4)$, $(2,5)$, $(2,6)$, and
$(3,7)$, we'll have a knotted embedding of $D_4$.
On the other hand, if it's $135$ that links $268$, we'll want to contract
$(1,4)$, $(2,6)$, $(2,7)$ and $(3,5)$ to achieve an embedding of $D_4$ that implies a
knotted cycle in $G_{9,28}$ by Lemma~\ref{D4-lemma}.

Similarly, if $l_{23}$ or $l_{25}$ is linked, we'll need to cut $36257$ along $35$. For $l_{24}$, we again cut $36257$ along $35$ and
further, cut $47251$ along $42$. 
Thus, in every case other than $j=6$ and $j=8$, we've shown
that assuming $l_{11}$ and $l_{2j}$ are both linked leads to an embedding
of $D_4$ that forces a knotted cycle in our embedding of
$G_{9,28}$. This shows that $l_{11} \Rightarrow l_{26}$ or $l_{28}$.

In much the same way, we now show $l_{11} \Rightarrow l_{53}$ or $l_{56}$. It's straightforward to verify that there'll be a knotted $D_4$ in case 
both $l_{11}$ and one of $l_{51}$, $l_{52}$, or $l_{57}$ are linked. 
As for $l_{54}$, it's the same link as $l_{22}$, which we treated above.  The two remaining cases require cutting, as we will now describe. If $l_{11}$ and $l_{55}$ are 
both linked, cut $7358$ along $57$ and use the edges $(6,9)$ and
$(7,9)$ to identify $36257$ as the sum (in homology) of $3697$ and $62579$.
(We'll call this operation ``cutting along 697.'') 
Finally, if it's $l_{58}$ that is linked, cut $7428$ along $27$ and 
$36257$ along $35$.

As a final step in the argument for $l_{11}$, we construct a new $P_8$
subgraph, $T_1$, from $S_1$ by
interchanging the vertex labels $8$ and $9$. Thus, the linked pairs in $T_1$ are
$m_{11} = 149,36257$; $m_{12} = 159,36247$; $\ldots$;
$m_{18} = 4261,3759$ (compare with the table of $l_{1j}$ above). 
Again, by \cite{sa}, at least one of these $m_{1j}$ is linked in
any embedding of $G_{9,28}$.
We'll argue that this, 
together with
$l_{11} \Rightarrow l_{26}$ or $l_{28}$
and 
with $l_{11} \Rightarrow l_{53}$ or $l_{56}$,
imply that
there is a knotted $D_4$ in our embedding of $G_{9,28}$.

First notice that $l_{11}$ will form a $D_4$ with $m_{12}$, $m_{13}$, $m_{15}$, $m_{16}$, and
$m_{18}$ (after cutting $36257$ along $27$). 
In other words, $l_{11} \Rightarrow m_{11}$, $m_{14}$, or $m_{17}$.
Now, each of the following pairs of link forms a $D_4$:
$m_{11}$ and $l_{26}$ (cut 36257 along 35),
$m_{14}$ and $l_{26}$ (no cuts),
$m_{14}$ and $l_{28}$ (cut 36147 along 13),
and
$m_{11}$ and $l_{28}$ (cut 36257 along 85, 86, 87, i.e., $36257 = 3687 + 785 + 5862$).
Since $l_{11} \Rightarrow l_{26}$ or $l_{28}$, we deduce that if $l_{11}$ is linked, then we can assume $m_{17}$ is linked, too, i.e., $l_{11} \Rightarrow m_{17}$.

To complete the argument for $l_{11}$, 
we construct $T_2$, another embedding of $P_8$,
and its links $m_{2j}$, in the usual way by applying the  
$7$--cycle $\gamma$ to the vertex indices of $T_1$.
Using the same type of argument as above,
we can see that
$m_{17} \Rightarrow m_{22}$ or $m_{24}$
(the only case that requires any cuts
is when $m_{17}$ is combined with $m_{28}$,
where we cut
$3629$ along $382$
and  $5372$ along $57$).
As we showed earlier, 
$l_{11} \Rightarrow l_{26}$ or $l_{28}$; 
and we can show $m_{22}$ combined with
either $l_{26}$ or $l_{28}$ yields a $D_4$
(for $m_{22}$ with $l_{26}$, cut 47351 along 13;
for $m_{22}$ with $l_{28}$, no cuts);
we therefore conclude that if $m_{22}$ and $l_{11}$ are both linked, then there is a knotted $D_4$. 
Similarly, we know $l_{11} \Rightarrow l_{53}$ or $l_{56}$; 
and we can show $m_{24}$ combined with either $l_{53}$ or $l_{56}$ produces a $D_4$
(for $m_{24}$ with $l_{53}$, cut 47251 along 187;
for $m_{24}$ with $l_{56}$, cut 47251 along 24);
hence $l_{11}$ and $m_{24}$ give a $D_4$. 
It follows that if $l_{11}$ and $m_{17}$ are both linked,
there is a nontrivial knot in our embedding of $G_{9,28}$. 
Thus, when $l_{11}$ is linked, 
no matter which pair of cycles $m_{1j}$, $j = 1, \ldots, 8$ is linked in the Petersen
graph $T_1$, we will have a knotted cycle in
our embedding of $G_{9,28}$. 
This completes the argument for $l_{11}$.

Next, we show that for all $i = 1, \ldots, 7$,
we can assume all $l_{ij}$ except $l_{i3}$, $l_{i5}$, and $l_{i8}$ are unlinked.
Indeed, we have shown that if $l_{11}$ is linked,
there will be a knot in our embedding of $G_{9,28}$, which is the goal
of our proof. Thus, 
we can assume $l_{11}$ is not linked. By symmetry, the same argument can be applied to each $l_{i1}$, $i = 1, \ldots, 7$. 
Since $l_{14}$ becomes $l_{11}$ after applying the
involution $\delta = (1,5)(2,4)(6,7)$, which is a symmetry of $G_{9,28}$, 
we can likewise assume $l_{14}$, and hence
every $l_{i4}$ is unlinked. 
Also, $l_{12} = l_{51}$ which, as we have already noted, is not linked. So, we may assume, no $l_{i2}$ is linked.
Next, suppose $l_{17}$ is linked.
We have mentioned that $m_{17} \Rightarrow m_{22}$ or
$m_{24}$, and, by symmetry, the same argument shows $l_{17} \Rightarrow l_{22}$ or $l_{24}$. 
However, as we have noted, 
no $l_{i2}$ or $l_{i4}$ is linked,
or else we will have a knotted $D_4$.
Thus, $l_{17}$ is not linked either. 
Again, by symmetry, this implies no $l_{i7}$ is linked. Since $l_{16} = \delta (l_{17})$, we can also assume $l_{16}$, and hence all $l_{i6}$, are also not linked.
Thus, only pairs of the form $l_{ij}$ with $j = 3$, $5$, or $8$ are linked.

Now, if $l_{13}$ and $l_{23}$ are both linked,
we get a knotted $D_4$ by doing a few cuts at various stages
(cut $36157$ along $35$,
$47261$ along $42$,
$4261$ along $496$,
and $3615$ along $391$).
Thus, $l_{13} \Rightarrow l_{25}$ or $l_{28}$;
and, by symmetry, we have

\begin{equation}
\label{eqn-3to5}
l_{i3} \Rightarrow l_{(i+1)5} \mbox{ or }l_{(i+1)8}
\end{equation}
where, 
whenever the first index $i$ of $l_{ij}$ is greater than 7 or less than 1 (which comes up further below),
we reduce $i$ mod 7, except that we use 7 instead of 0.
We also argue that $l_{15} \Rightarrow l_{53}$, 
by showing $l_{15}$ and $l_{55}$ together give a $D_4$ (cut 7358 along 57)
and $l_{15}$ with $l_{58}$ together give a $D_4$ (no cuts).
It follows from symmetry that 
\begin{equation}
\label{eqn-5to3}
l_{i5} \Rightarrow l_{(i+4)3}
\end{equation}

We now apply the permutation $\delta=(1,5)(2,4)(6,7)$ to implication~(\ref{eqn-5to3}) above.
First, recall that $\gamma^k (l_{ij}) = l_{(i+k)j}$,
where, as before,
$\gamma$ is the 7-cycle $(1,2,3,4,5,6,7)$.
Also, note that $\delta \gamma = \gamma^{-1} \delta$.
Hence, since $\delta (l_{15}) = l_{18}$,
we get
 $\delta (l_{i5}) = \delta \gamma^{i-1} (l_{15}) = (\gamma^{-1})^{i-1} \delta (l_{15})
 =  \gamma^{1-i}  (l_{18}) = l_{(2-i)8}$.
 Also, $\delta(l_{13})= l_{13}$,
 which, by a similar argument as above, gives
 $\delta(l_{i3}) = l_{(2-i)3}$.
Thus, applying $\delta$ to implication~(\ref{eqn-5to3}) gives
$l_{(2-i)8} \Rightarrow l_{(2-i-4)3}$,
which, by replacing both occurrences of $2-i$ with $i$, gives
\begin{equation}
\label{eqn-8to3}
l_{i8} \Rightarrow l_{(i+3)3}
\end{equation}

Combining implications (\ref{eqn-3to5}), (\ref{eqn-5to3}), and (\ref{eqn-8to3})
yields
$l_{i3} \Rightarrow l_{(i+4)3} \mbox{ or } l_{(i+5)3}$.
In particular, 
$l_{13} \Rightarrow l_{53} \mbox{ or } l_{63}$,
$l_{53} \Rightarrow l_{23} \mbox{ or } l_{33}$,
and
$l_{63} \Rightarrow l_{33} \mbox{ or } l_{43}$.
These three implications together give
$l_{13} \Rightarrow l_{23} \mbox{ or } l_{33} \mbox{ or } l_{43}$.
We've already seen that $l_{13}$ and $l_{23}$ together give a $D_4$.
We also check that
$l_{13}$ and $l_{43}$ give a $D_4$ 
(cut 36157 along 697, 61579 along 59, 62413 along 64, 6413 along 61).
Thus we conclude that
$l_{13} \Rightarrow l_{33}$,
which, by symmetry, gives
\begin{equation}
\label{eqn-33}
l_{i3} \Rightarrow l_{(i+2)3}
\end{equation}
Applying implication (\ref {eqn-33}) repeatedly gives
$l_{13} \Rightarrow l_{33} \Rightarrow l_{53} \Rightarrow l_{73} \Rightarrow l_{23}$,
which means if $l_{13}$ is linked, we get a knotted $D_4$
(since  $l_{13}$ and $l_{23}$ give a $D_4$).
By symmetry, we get a knotted $D_4$ if any $l_{i3}$ is linked.
This, and implications  (\ref{eqn-5to3}) and (\ref{eqn-8to3}),
together imply that if any $l_{i5}$ or $l_{i8}$ is linked,
we get a knotted $D_4$.

This completes the proof that $G_{9,28}$ is IK. No matter
which $l_{1i}$ is linked, we have found
a nontrivial knot in the given embedding of $G_{9,28}$.

}\end{proof}

\subsection{Cousin 1151 of $G_{9,28}$}
\label{Sec-C1151}

We now focus on the childless Cousin 1151 of $G_{9,28}$
(Figure~\ref{figG28C1151abstract})
and show that it is minor minimal.
This implies, by Lemma~\ref{lemYTMMIK}, that the 156 graphs consisting of Cousin 1151 and its ancestors are all MMIK.

\begin{figure}[ht]
\begin{center}
\includegraphics[scale=0.5]{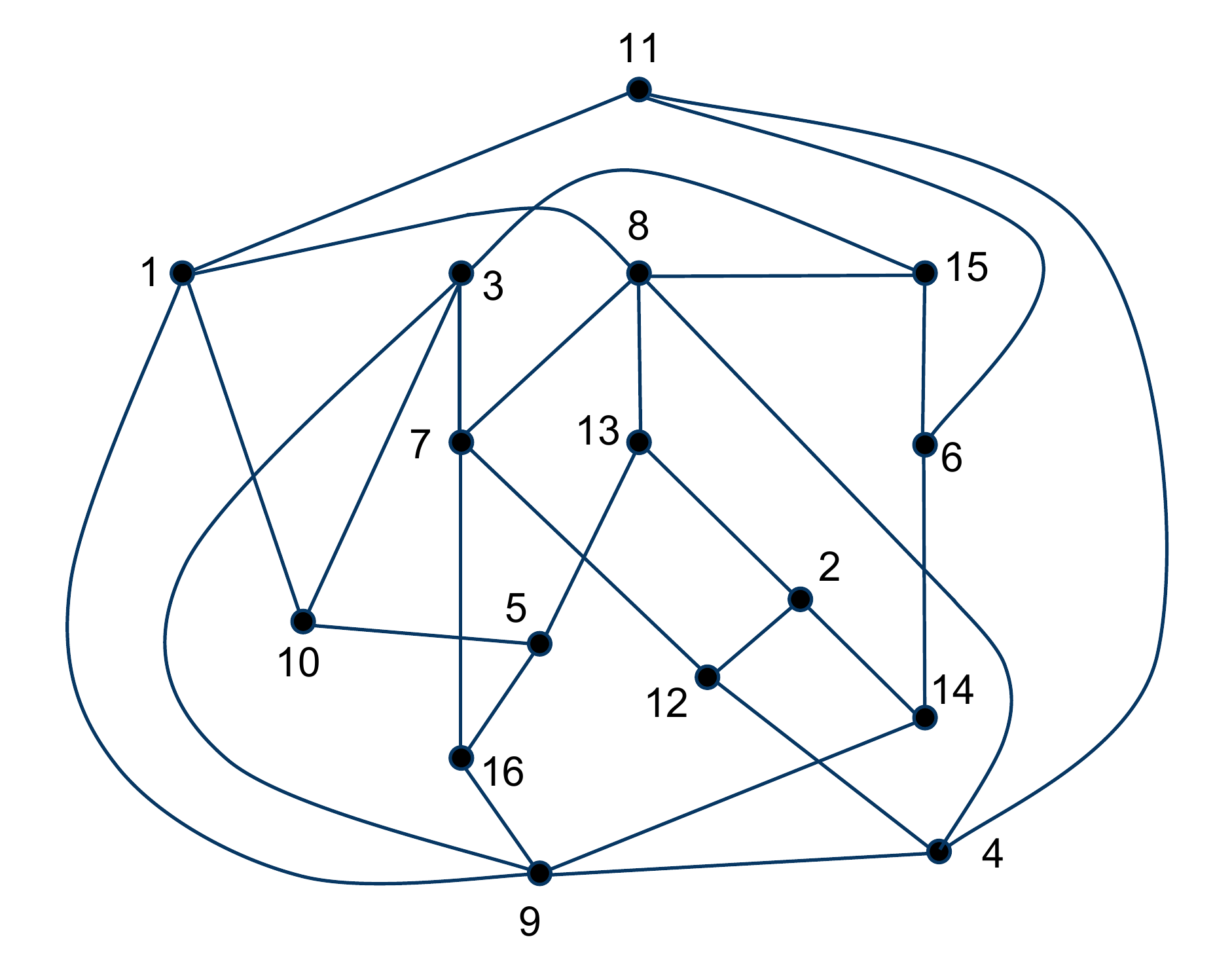}
\caption{Cousin 1151 of $G_{9,28}$.}
\label{figG28C1151abstract}
\end{center}
\end{figure}


It turns out that Cousin 1151 has no symmetries;
hence we consider all its 56 minors obtained by
deleting or contracting each of its 28 edges.
Of these 56 minors, 54 are $2$--apex.
Below, we list each of these 54 graphs as
$G-e$ or $G/e$,
followed by the two vertices that can be removed to obtain a planar graph.
The remaining two graphs are listed as ``not $2$--apex''.
Note that whenever we contract an edge $(a,b)$ in $G$,
we relabel some of the vertices in $G/(a,b)$, as follows:
If $a <b$, then we use the label $a$ for the vertex that edge $(a,b)$ contracts to;
furthermore, we take the vertex in $G$ with the largest label 
and relabel it as vertex $b$ in $G/(a,b)$.

\noindent
$G-(1, 8)$, $\{4, 5\}$; $G/(1, 8)$, $\{1, 2\}$;
$G-(1, 9)$, $\{4, 14\}$; $G/(1, 9)$, $\{1,2\}$;
$G-(1, 10)$, $\{2, 8\}$; $G/(1, 10)$, $\{4, 14\}$;
$G-(1, 11)$, $\{2, 3\}$; $G/(1,11)$, $\{5, 14\}$;
$G-(2, 12)$, $\{1, 3\}$; $G/(2, 12)$, $\{2, 8\}$;
$G-(2, 13)$, $\{1,7\}$; $G/(2, 13)$, $\{1, 2\}$;
$G-(2, 14)$, $\{1, 5\}$; $G/(2, 14)$, $\{2, 3\}$;
$G-(3,7)$, $\{1, 2\}$; $G/(3, 7)$, $\{3, 4\}$;
$G-(3, 9)$, $\{6, 7\}$; $G/(3, 9)$, $\{1, 2\}$;
$G-(3, 10)$, $\{2, 4\}$; $G/(3, 10)$, $\{3, 8\}$;
$G-(3, 15)$, $\{1, 9\}$; $G/(3,15)$, $\{2, 3\}$;
$G-(4, 8)$, $\{7, 13\}$; $G/(4, 8)$, $\{2, 3\}$;
$G-(4, 9)$, $\{1,7\}$; $G/(4, 9)$, $\{2, 3\}$;
$G-(4, 11)$, $\{2, 3\}$; $G/(4, 11)$, $\{7, 13\}$;
$G-(4,12)$, $\{2, 8\}$; $G/(4, 12)$, $\{1, 13\}$;
$G-(5, 10)$, $\{3, 8\}$; $G/(5, 10)$, $\mbox{not $2$--apex}$;
$G-(5, 13)$, $\{1, 2\}$; $G/(5, 13)$, $\{5, 6\}$;
$G-(5, 16)$, $\{1,2\}$; $G/(5, 16)$, $\mbox{not $2$--apex}$;
$G-(6, 11)$, $\{5, 7\}$; $G/(6, 11)$, $\{2, 3\}$;
$G-(6, 14)$, $\{1,7\}$; $G/(6, 14)$, $\{5, 6\}$;
$G-(6, 15)$, $\{1, 7\}$; $G/(6, 15)$, $\{6, 7\}$;
$G-(7,8)$, $\{3, 4\}$; $G/(7, 8)$, $\{1, 2\}$;
$G-(7, 12)$, $\{1, 9\}$; $G/(7, 12)$, $\{3,14\}$;
$G-(7, 16)$, $\{3, 4\}$; $G/(7, 16)$, $\{1, 2\}$;
$G-(8, 13)$, $\{1, 2\}$; $G/(8,13)$, $\{3, 4\}$;
$G-(8, 15)$, $\{2, 3\}$; $G/(8, 15)$, $\{1, 9\}$;
$G-(9, 14)$, $\{2,3\}$; $G/(9, 14)$, $\{1, 7\}$;
$G-(9, 16)$, $\{1, 2\}$; $G/(9, 16)$, $\{6, 13\}$.

By Lemma~\ref {lem2ap},
all the 54 graphs that are  $2$--apex have knotless embeddings.
In Figures~\ref {figG28C1151contract-5-10} and \ref {figG28C1151contract-5-16}
we display knotless embeddings for
the two graphs that are not $2$--apex.
(We used a computer program to verify that every cycle in these two embeddings is a trivial knot.)

\begin{figure}[ht]
\begin{center}
\includegraphics[scale=0.45]{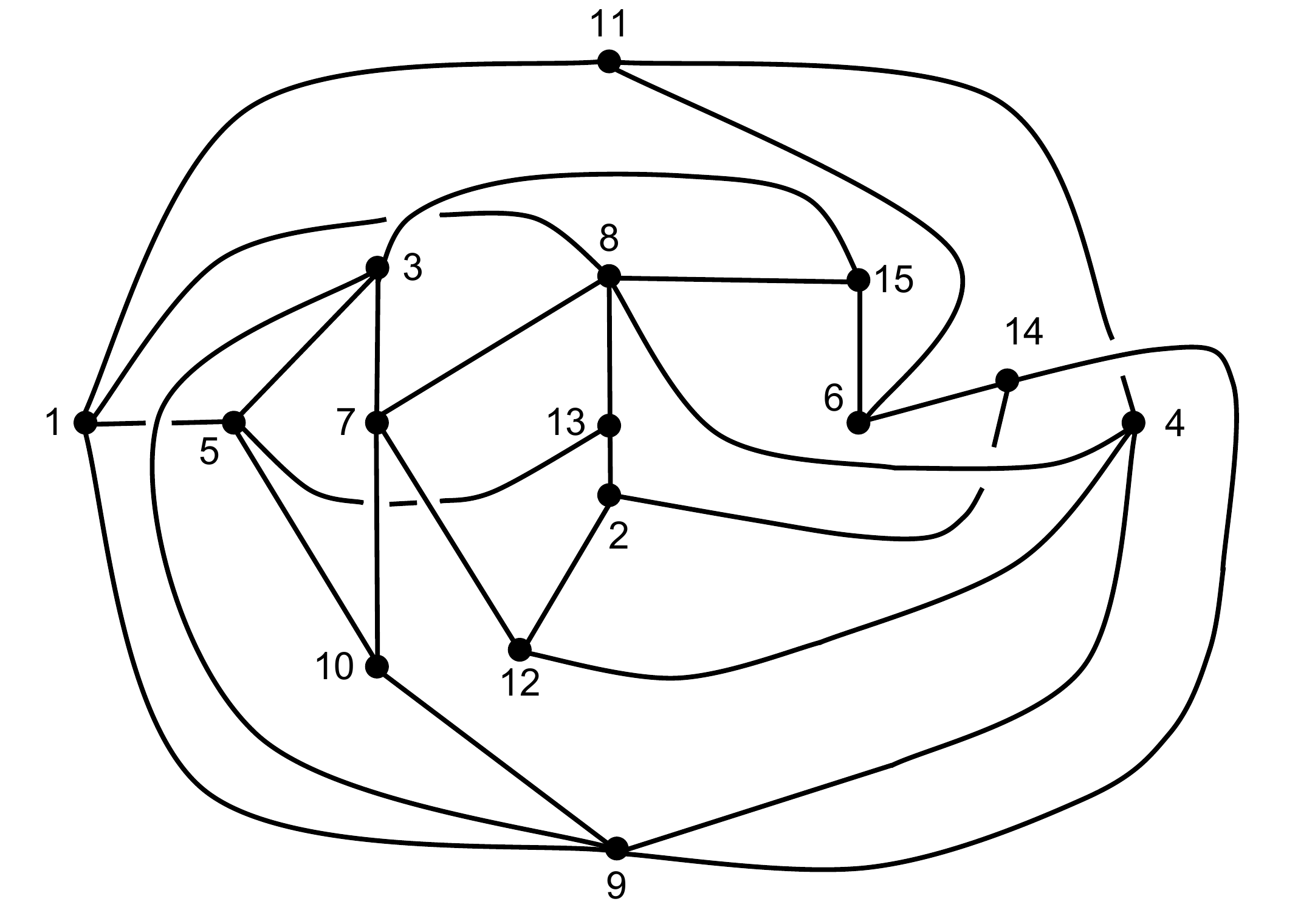}
\caption{A knotless embedding of the graph obtained by contracting edge $(5,10)$
in Cousin 1151 of $G_{9,28}$ (vertex 10 used to be vertex 16).}
\label {figG28C1151contract-5-10}
\end{center}
\end{figure}

\begin{figure}[ht]
\begin{center}
\includegraphics[scale=0.5]{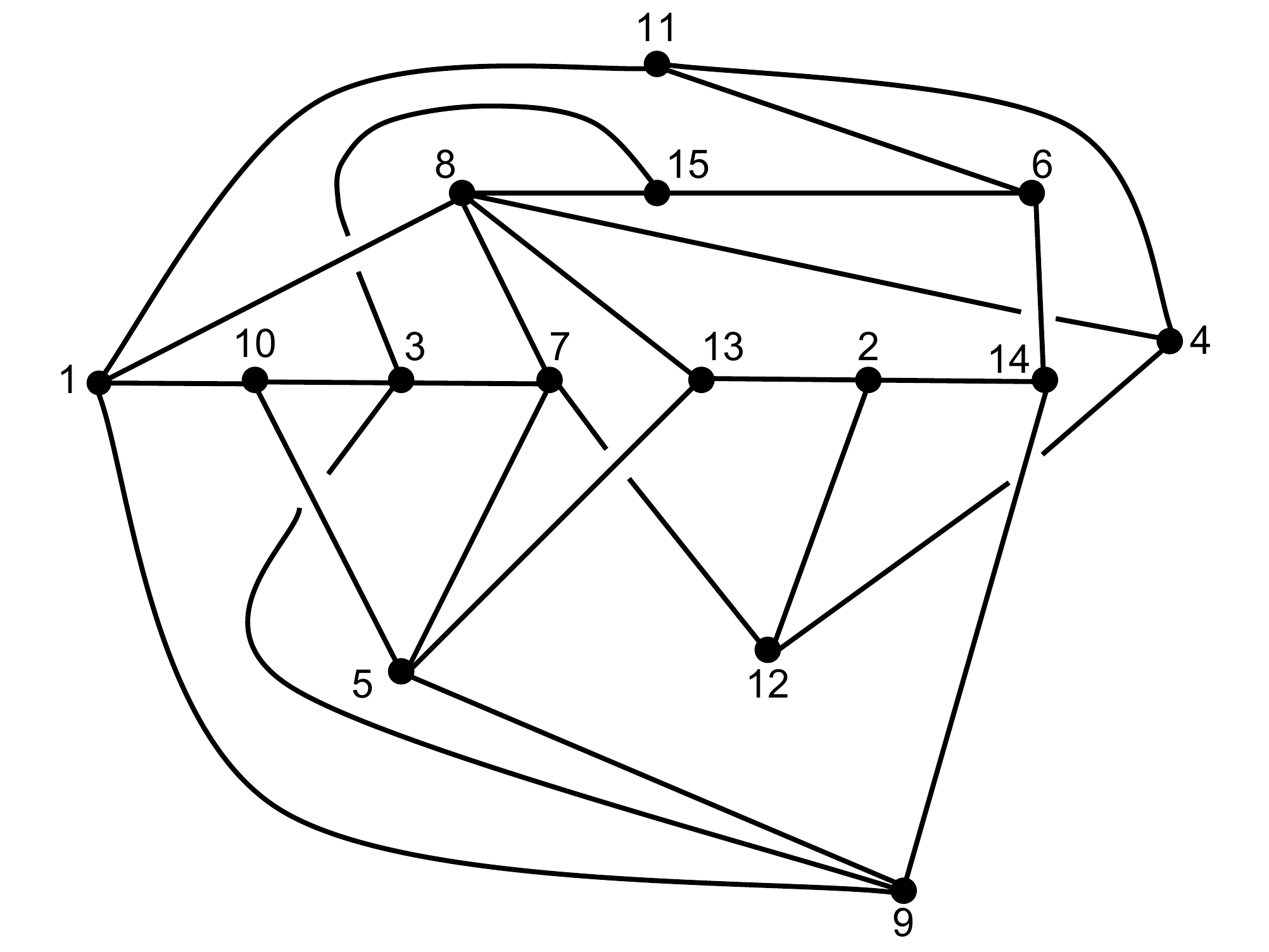}
\caption {A knotless embedding of the graph obtained by contracting edge $(5,16)$
in Cousin 1151 of $G_{9,28}$.}
\label {figG28C1151contract-5-16}
\end{center}
\end{figure}

\section{The $G_{14,25}$ Family}

The graph $G_{14,25}$,
depicted in Figure~\ref{figG1425},
 has 14 vertices and 25 edges:
$(1, 6)$, $(1, 9)$,
$(1, 10)$, $(1, 11)$, $(2, 6)$, $(2, 7)$, $(2, 8)$,
$(2, 14)$, $(3, 10)$, $(3, 12)$, $(3, 13)$, $(4, 6)$, $(4, 7)$, $(4, 9)$, $(4, 11)$,
$(5, 7)$, $(5, 8)$, $(5, 10)$, $(5, 14)$, $(6, 13)$, $(7, 12)$, $(8, 11)$,
$(8, 13)$, $(9, 12)$, $(9, 14)$.

\begin{figure}[ht]
\begin{center}
\includegraphics[scale=1.0]{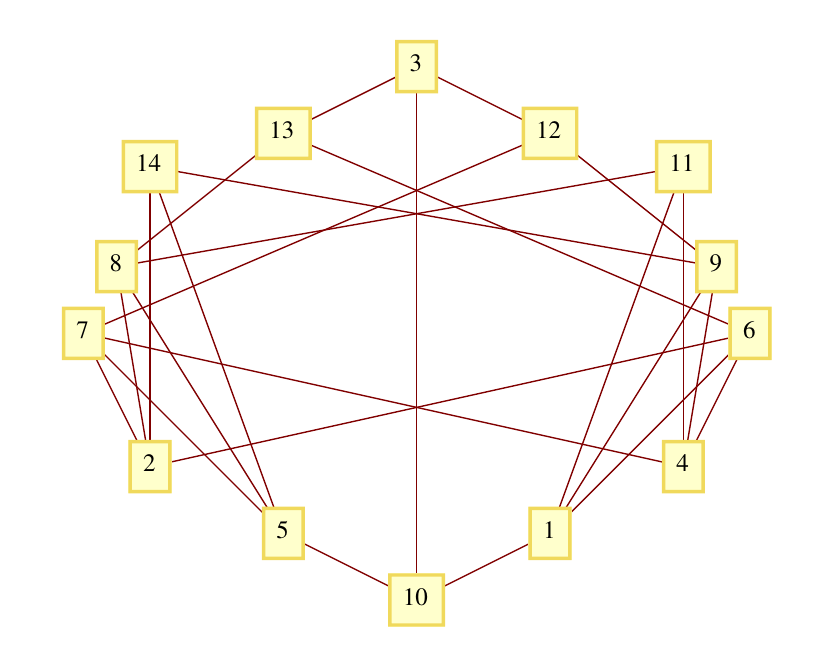}
\caption {The graph $G_{14,25}$.}
\label {figG1425}
\end{center}
\end{figure}

We obtained this graph
by starting with one of the cousins of $G_{9,28}$
that is IK but not MM,
and repeatedly deleting or contracting edges
(a total of three edges)
until we arrived at a MMIK graph.

The graph $G_{14,25}$ is interesting
since it is a MMIK graph with over 600,000 cousins!
We don't know exactly how many cousins it has;
we stopped the computer program after about one week of continuous operation,
since we had no upper bound on the number of cousins
and therefore had no idea how much longer the program might continue to run.
We sampled a small number of these cousins,
which turned out not to be MMIK.
Nevertheless, we wouldn't be surprised if such a  large family turned out to contain
hundreds or thousands of MMIK graphs.

\begin{lemma}
The graph $G_{14,25}$  is MMIK.
\end{lemma}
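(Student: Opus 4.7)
The plan is to follow the same two-step template used for Cousin~1151 of $G_{9,28}$ in Section~\ref{Sec-C1151}: first establish that $G:=G_{14,25}$ is IK, and then show that for every edge $e$ of $G$, neither $G-e$ nor $G/e$ is IK.

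For the IK step, I would invoke the computer program of \cite{mn}, which certifies intrinsic knottedness by showing that every embedding contains a $D_4$ minor with opposite cycles linked; Lemma~\ref{D4-lemma} then produces a knotted cycle. A direct check is needed here because $G_{14,25}$ was obtained from an IK but not MMIK cousin of $G_{9,28}$ by deleting or contracting three edges, and edge deletion need not preserve IKness.

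For the minor-minimality step, I would begin by determining the automorphism group of $G$. A short analysis (forced in part by the fact that vertex $3$ is the only degree-$3$ vertex all of whose neighbors have degree $3$) shows that the permutation $(1,5)(2,4)(6,7)(8,9)(12,13)$ is an involution of $G$, which partitions the $25$ edges into $13$ orbits and roughly halves the number of minors to examine. For each orbit representative $e$, I would examine $G-e$ and $G/e$ and try to show each is $2$-apex by exhibiting a pair of vertices whose removal yields a planar graph; Lemma~\ref{lem2ap} then supplies a knotless embedding. In practice a short script can, for each minor $H$, iterate over all pairs $\{u,v\}$ and apply a planarity test to $H-\{u,v\}$; this disposed of all but two of the $56$ minors of Cousin~1151 and should handle the bulk of the cases here as well. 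As in Section~\ref{Sec-C1151}, the vertex relabeling after each contraction must be recorded carefully.

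The main obstacle will be the minors (if any) that fail to be $2$-apex. For each such exceptional minor, one must construct an explicit knotless embedding and verify by computer that every cycle of that embedding is an unknot, in the style of Figures~\ref{figG28C1151contract-5-10} and \ref{figG28C1151contract-5-16}. If instead every minor turns out to be $2$-apex, the lemma follows directly from Lemma~\ref{lem2ap} with no hand-drawn embeddings required.
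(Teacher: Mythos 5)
Your proposal follows the paper's proof essentially step for step: the program of \cite{mn} certifies that $G_{14,25}$ is IK, an involution cuts the $56$ minors down to $26$, and each of those is shown to be $2$-apex by exhibiting two vertices whose deletion leaves a planar graph, so Lemma~\ref{lem2ap} finishes the argument (in the event \emph{all} $26$ minors are $2$-apex, so no hand-built knotless embeddings \`a la Section~\ref{Sec-C1151} are needed). One correction: the permutation you wrote, $(1,5)(2,4)(6,7)(8,9)(12,13)$, is not an automorphism of $G_{14,25}$ --- it sends the edge $(1,11)$ to the non-edge $(5,11)$. The involution the paper uses is $(1,5)(2,4)(6,7)(8,9)(11,14)(12,13)$; with the transposition $(11,14)$ restored, the $25$ edges do fall into $12$ orbits of size two plus the fixed edge $(3,10)$, giving the $13$ representatives and $26$ minors you describe.
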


\begin{proof}
Let $G$ denote the graph $G_{14,25}$.
We show that $G$ is IK by using the computer program described in \cite{mn} to verify that there is a $D_4$ minor with a knotted Hamiltonian cycle in every embedding of the graph.
To prove that $G$ is MM,
since it has no isolated vertices,
it will be enough to show that
for every edge $e$ in $G$,
neither $G - e$ nor $G/e$ is IK.

The graph $G$ has an involution
$(1,5)(2,4)(6,7)(8,9)(11,14)(12,13)$
which allows us to identify all its 25 edges in pairs,
with the exception of the edge $(3,10)$
(which is fixed by the involution).
Thus, up to symmetry, there are 13 choices
for the edge $e$ and 26 minors ($G-e$ or $G/e$) to investigate.
Each of these 26 minors turns out to be $2$--apex.
Below, we list each of them as
$G-e$ or $G/e$,
followed by the two vertices that can be removed to obtain a planar graph.
(See the note in Section~\ref{Sec-C1151} about vertex relabeling.)

\noindent
$G-(1, 6)$, $\{2, 3\}$;  $G/(1, 6)$, $\{1, 6\}$;
$G-(1, 9)$, $\{2, 4\}$;  $G/(1, 9)$, $\{1, 2\}$;
$G-(1, 10)$, $\{2, 4\}$;  $G/(1, 10)$, $\{2, 4\}$;
$G-(1, 11)$, $\{2, 3\}$;  $G/(1, 11)$, $\{1, 7\}$;
$G-(2,6)$, $\{1, 7\}$;  $G/(2, 6)$, $\{2, 3\}$;
$G-(2, 7)$, $\{1, 3\}$;  $G/(2, 7)$, $\{1, 2\}$;
$G-(2, 8)$, $\{3, 5\}$;  $G/(2, 8)$, $\{1, 3\}$;
$G-(2, 14)$, $\{1, 3\}$;  $G/(2, 14)$, $\{1, 2\}$;
$G-(3,10)$, $\{2, 4\}$;  $G/(3, 10)$, $\{2, 4\}$;
$G-(3, 12)$, $\{1, 2\}$;  $G/(3, 12)$, $\{3, 4\}$;
$G-(6,13)$, $\{1, 7\}$;  $G/(6, 13)$, $\{2, 5\}$;
$G-(8, 11)$, $\{1, 7\}$;  $G/(8, 11)$, $\{2, 3\}$;
$G-(8, 13)$, $\{2, 3\}$;  $G/(8, 13)$, $\{1, 7\}$.

It follows from Lemma~\ref {lem2ap}
that each of these 26 minors has a knotless embedding,
and hence $G_{14, 25}$ is MMIK.
\end{proof}

\section*{Acknowldedgements}
The research was supported in part by the Undergraduate Research Center at Occidental College
and by DMS-0905300 at Occidental College.

%
%
%
%

\end{document}